\documentclass{amsart}
\usepackage{amsmath, amsthm, amssymb, amsfonts}
\usepackage[utf8]{inputenc} 
\usepackage[T1]{fontenc}    
\usepackage{nicefrac}       
\usepackage{microtype}      
\usepackage{lipsum}
\usepackage{mathtools}
\usepackage{enumitem}
\usepackage{float}
\usepackage{graphicx}
\graphicspath{ {./images/} }

\newcommand{\xfrac}[2]{#1/#2}
\newcommand{\bfrac}[2]{(#1)/#2}
\renewcommand{\Im}{\operatorname{Im}}

\newtheorem{theorem}{Theorem}
\newtheorem{lemma}{Lemma}
\newtheorem{corollary}{Corollary}

\theoremstyle{definition}

\title{On C. Michel's hypothesis about the modulus of typically real polynomials}

\begin{document}

\author{Dmitriy Dmitrishin}
\address{Odessa National Polytechnic University, 1 Shevchenko Ave.,
Odessa 65044, Ukraine}
\email{dmitrishin@opu.ua}

\author{Andrey Smorodin}
\address{Odessa National Polytechnic University, 1 Shevchenko Ave.,
Odessa 65044, Ukraine}
\email{andrey.v.smorodin@opu.ua}

\author{Alex Stokolos}
\address{Georgia Southern University, Statesboro GA, 30460}
\email{astokolos@geogriasouthern.edu}

\begin{abstract}
Extremal problems for typically real polynomials go back to a paper
by W.~W.~Rogosinski and G.~Szeg\H{o}, where a number of problems were posed, which were partially solved 
by using orthogonal polynomials. 
Since then, not too many new results on extremal properties of typically real polynomials have been obtained.
Fundamental work in this direction is due to
M.~Brandt, who found a novel way of solving extremal problems.
In particular, he solved 
 C. Michel's problem of estimating the modulus of a typically real polynomial of odd degree.  
On the other hand, D.~K.~Dimitrov showed the effectivity of Fej\'er's method
for solving the Rogosinski--Szeg\H{o} problems.
In this article, we completely solve Michel's problem by using Fej\'er's method.
\end{abstract}

\keywords{Typically real polynomials, extremal trigonometric polynomials, Fej\'er method}

\maketitle

\section{Introduction}
 Let $T_{N}$ denote the set of all typically real in the unit disk $\mathbb{D}=\{z:|z|<1\}$ polynomials $F_{N}(z)=z+\sum^N_{j=2}c_{j}z^{j}$  of degree
 $N$ normalized by $F_{N}(0)=0$, $F'_{N}(0)=1$. The task is to find
\begin{equation}\label{e1}
  J_{N}=\max_{F_{N}(z) \in T_{N}} \sup_{z\in\mathbb{D}}\,|F_{N}(z)|=\max_{F_{N}(z) \in T_{N}} \max_{t}\,|F_{N}(e^{it})|.
\end{equation}
 Evidently $|F_{N}(e^{it})| \leq 1+\sum^{N}_{j=2}|c_{j}|$, so
 $$
  J_{N}\leq\max_{\alpha_{1}=1} \bigg\{ \sum_{j=1}^{N}|\alpha_{j}|:\sum_{j=1}^{N}\alpha_{j}\sin jt\geq0,\,t \in [0, \pi] \bigg\}.
 $$
If all coefficients of the extremal sine polynomial are not negative then the last
inequality becomes an equality. Here we use the fact that $F_N(z)=z+\sum^N_{j=2}\alpha_{j}z^{j}$ based on coefficients of non-negative on $[0,\pi]$ sine polynomial is typically real.

 In \cite{1} it was conjectured that
\begin{equation}\label{e2}
  J_N \leq \frac14\csc^2\frac{\pi}{2(N+2)}.
\end{equation}
This was proved in \cite{2}, and it was shown that for $N$ odd, \eqref{e2} turns into an equality.
Moreover, an extremal polynomial was exhibited:
\begin{equation}\label{e3}
  \frac{z}{z^2+1-2z\cos{\frac{\pi}{N+2}}}-\frac{4\cos^2\frac{\pi}{2(N+2)}}{N+2}(1+z^{N+2})\frac{1-z}{1+z}  \\
  \bigg(\frac{z}{z^2+1-2z\cos\frac{\pi}{N+2}}\bigg)^2,
\end{equation}
but its uniqueness was not considered.

In this paper we prove that the extremal polynomial is unique (up to signs of even degree terms) for all $N$,
 and for even $N$ we have
 $$
 J_N=\frac{1}{4\nu^2},
 $$
 where $\nu$ is the least positive root of $U'_{N+1}(x)$, where
  $U_j(x)=U_j(\cos t)=\frac{\sin(j+1)t}{\sin t}=2^j x^j + \cdots$ is a Chebyshev polynomial of the second kind. 
We also provide an algorithm for finding the coefficients of the extremal polynomial, and an explicit formula for the coefficients for $N$ odd.

Note that for $N$ odd, $\sin \frac{\pi}{2(N+2)}$ is the least positive root of
$U_{N+1}(x)$.
 
\section{Preliminaries}
 
Let
\begin{equation}\label{eq20}
A =
 \begin{pmatrix}
   0 & \frac{1}{2}  & 0 & 0 &  \cdots  \\
   \frac{1}{2}  & 0  &  \frac{1}{2} & 0 & \cdots \\
   0 &  \frac{1}{2}  & 0 & \frac{1}{2} & \cdots \\
   0  & 0  & \frac{1}{2} & 0 & \cdots \\
  \cdots & \cdots & \cdots & \cdots & \cdots 
 \end{pmatrix},\quad
 B =
 \begin{pmatrix}
  0  &  0 & \frac{1}{2}  & 0 & \cdots  \\
  0 & 0  & 0 & \frac{1}{2} & \cdots  \\
  \frac{1}{2} & 0 & 0 &0 & \cdots \\
  0 & \frac{1}{2} & 0 &0 & \cdots \\
  \cdots & \cdots & \cdots & \cdots  \\
 \end{pmatrix}
\end{equation}

be symmetric $N\times N$ matrices, and  $I$ the $N\times N$ unit matrix.

\begin{lemma}\label{lem2.1}
We have
$$
\det(4x^2(I+A)-(I-B))=\frac{1}{2^{N+2}x}U_{N+1}(x)U'_{N+1}(x).
$$
\end{lemma}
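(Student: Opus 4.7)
The plan is to factor $M=4x^{2}(I+A)-(I-B)$ as a product of two commuting matrices plus a rank-two corner correction, then apply the matrix-determinant lemma and identify the resulting factors with $U_{N+1}$ and $U'_{N+1}$.

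Writing $J:=2A$ for the $N\times N$ tridiagonal shift (zeros on the diagonal, ones on the sub- and superdiagonals), a direct computation gives $J^{2}=2B+\operatorname{diag}(1,2,\dots,2,1)$, which rearranges to
\[
M = CD + \tfrac{1}{2}\bigl(e_{1}e_{1}^{T}+e_{N}e_{N}^{T}\bigr), \qquad C := 2I+J, \quad D := (2x^{2}-1)I+\tfrac{1}{2}J.
\]
Since $C$ and $D$ are polynomials in $J$, they commute and share the sine eigenbasis $(v_{k})_{j}=\sqrt{2/(N+1)}\sin(jk\pi/(N+1))$; the eigenvalues of $CD$ are $\mu_{k}=8\cos^{2}(\phi_{k}/2)(x^{2}-\sin^{2}(\phi_{k}/2))$ with $\phi_{k}=k\pi/(N+1)$, and a short calculation gives $\det(CD)=(N+1)U_{2N+1}(x)/(2^{N+1}x)$.

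Because $CD$ is persymmetric, the matrix-determinant lemma applied to the corner correction yields $\det M=\det(CD)\,(1+h_{+})(1+h_{-})$, where $h_{\pm}$ are combinations of the $(1,1)$ and $(1,N)$ entries of $(CD)^{-1}$. Using $(v_{k})_{N}=(-1)^{k+1}(v_{k})_{1}$ and the spectral expansion, one obtains $h_{\pm}=\tfrac{1}{N+1}\sum_{k\in S_{\pm}}s_{k}^{2}/(x^{2}-s_{k}^{2})$ with $s_{k}:=\sin(k\pi/(2(N+1)))$ and $S_{+},S_{-}$ the odd and even indices in $\{1,\dots,N\}$ respectively. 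Setting $P_{\pm}(y):=\prod_{k\in S_{\pm}}(y-s_{k}^{2})$, a standard partial-fraction identity then rewrites
\[
1+h_{\pm} = \frac{(|S_{\mp}|+1)\,P_{\pm}(x^{2})+x^{2}P'_{\pm}(x^{2})}{(N+1)\,P_{\pm}(x^{2})}.
\]

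The final step is to match $P_{+}$ and $P_{-}$ with Chebyshev polynomials. Since $s_{k}=\cos((N+1-k)\pi/(2(N+1)))$, the two products partition the nonzero squared roots of $U_{2N+1}$ exactly according to the factorisation $U_{2N+1}=2U_{N}T_{N+1}$: for $N$ even, $P_{+}(x^{2})=U_{N}(x)/2^{N}$ and $P_{-}(x^{2})=T_{N+1}(x)/(2^{N}x)$, and the assignment swaps for $N$ odd. Invoking the Chebyshev identities $T'_{N+1}=(N+1)U_{N}$ and $U_{N+1}=T_{N+1}+xU_{N}$ (so that $U'_{N+1}=(N+2)U_{N}+xU'_{N}$), the two numerators become $(N+1)U_{N+1}(x)/(2^{N+1}x)$ and $U'_{N+1}(x)/2^{N+1}$; cancelling the $P_{\pm}(x^{2})$ denominators against the corresponding factors in $\det(CD)$ yields $\det M=U_{N+1}(x)U'_{N+1}(x)/(2^{N+2}x)$. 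The main obstacle is precisely this final bookkeeping: the assignment of $U_{N}$ versus $T_{N+1}$ to the two parity classes flips with the parity of $N$, and one must check that in both cases the two numerators combine through the derivative identity for $U'_{N+1}$ so that the even and odd computations collapse to the same closed form.
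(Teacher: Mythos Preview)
Your argument is correct, but it takes a very different route from the paper's. The paper's proof is two lines: it observes that conjugating by $\operatorname{diag}(1,-1,1,\dots)$ (equivalently, replacing each entry $c_{ij}$ by $(-1)^{i+j}c_{ij}$) leaves the determinant unchanged while sending $A\mapsto -A$ and fixing $B$, so that $\det\bigl(4x^{2}(I+A)-(I-B)\bigr)=\det\bigl(4x^{2}(I-A)-(I-B)\bigr)$; it then quotes the formula for the latter determinant from the authors' earlier note \cite{3}. Your approach is instead a self-contained computation: you factor $M=CD+\tfrac{1}{2}(e_{1}e_{1}^{T}+e_{N}e_{N}^{T})$ with $C,D$ polynomials in the shift $J$, diagonalise $CD$ in the sine basis, apply the matrix-determinant lemma to the rank-two corner correction, and then identify the parity-split products $P_{\pm}$ with $U_{N}$ and $T_{N+1}$ via $U_{2N+1}=2U_{N}T_{N+1}$. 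What you gain is independence from the external reference and some structural insight (the appearance of $U_{2N+1}$ and its factorisation is invisible in the paper's argument); what you lose is brevity, and you have to carry the parity case-split to the end. The paper's sign-flip trick is worth noting in any case: it is a one-line device that converts $I+A$ problems into $I-A$ problems whenever the remaining terms involve only even shifts, and it would let you reuse any determinant formula already established for the companion pencil.
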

\begin{proof}
The matrices $\{c_{ij}\}_{i,j=1}^{N}$ and $\left\{(-1)^{i+j}c_{ij}\right\}_{i,j=1}^{N}$ have equal determinants, because each term $\pm c_{i_1 j_1}c_{i_2 j_2} \times \cdots \times c_{i_N j_N}$ in the decomposition of the determinant contains an even number of factors for which the sum of the row and column numbers is odd.
Indeed,
$(-1)^{i_1+j_1}$ $(-1)^{i_2+j_2}\cdots(-1)^{i_N+j_N}=(-1)^{2(1+2+\cdots+N)}=1$. 
Hence $\det\left(4x^2(I+A)-(I-B)\right)=\det\left(4x^2(I-A)-(I-B)\right)$. 

But \cite{3} gives a formula for  the latter determinant as in the statement.
\end{proof}

In the lemma below and throughout, we follow the terminology of Gantmacher \cite[Chapter X, \S 6]{11}.

\begin{lemma}\label{lem2.2}
The characteristic numbers of the matrix pencil $I+A-\lambda(I-B)$ are 
$$
{\left\{ \frac{1}{4\mu_j^2} \right\}}_{j=1}^{\lfloor\bfrac{N+1}{2}\rfloor},\quad
{\left\{ \frac{1}{4\nu_j^2} \right\}}_{j=1}^{N-\lfloor\bfrac{N+1}{2}\rfloor},\ \text{where}\ \mu_j=\cos\frac{j\pi}{N+2}, \ U'_{N+1}(\nu_j)=0;
$$
they can be arranged as
$$
\frac{1}{4\mu_1^2} < \frac{1}{4\nu_1^2} < \cdots < \frac{1}{4\nu^2_{\bfrac{N-1}{2}}} < \frac{1}{4\mu^2_{\bfrac{N+1}{2}}}
$$
if $N$ is odd, and
$$
\frac{1}{4\mu_1^2} < \frac{1}{4\nu_1^2} < \cdots < \frac{1}{4\mu^2_{\xfrac{N}{2}}} < \frac{1}{4\nu^2_{\xfrac{N}{2}}}
$$
if $N$ is even.
\end{lemma}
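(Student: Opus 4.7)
The plan is to read off the characteristic numbers directly from Lemma~\ref{lem2.1}. Since
$$
\det\bigl(4x^{2}(I+A)-(I-B)\bigr) \;=\; (4x^{2})^{N}\det\bigl((I+A)-\lambda(I-B)\bigr)\quad\text{with}\quad \lambda=\tfrac{1}{4x^{2}},
$$
a nonzero $x$ is a root of the left-hand side precisely when $\lambda=1/(4x^{2})$ is a characteristic number of the pencil. By Lemma~\ref{lem2.1} these are exactly the nonzero roots of $U_{N+1}(x)U'_{N+1}(x)$.

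Next I would locate and count the positive roots. From the parity $U_{n}(-x)=(-1)^{n}U_{n}(x)$, exactly $\lfloor(N+1)/2\rfloor$ of the $N+1$ roots $\cos(j\pi/(N+2))$ of $U_{N+1}$ are positive, namely the $\mu_{j}$. Since $U_{N+1}$ is an orthogonal polynomial its zeros are simple, and by Rolle's theorem the $N$ zeros of $U'_{N+1}$ interlace them and are disjoint from them. A short parity check then shows that $U'_{N+1}$ has exactly $N-\lfloor(N+1)/2\rfloor$ positive roots (the $\nu_{j}$), for a total of $N$ distinct positive values of $x$ and hence $N$ distinct candidate characteristic numbers $\{1/(4\mu_{j}^{2})\}\cup\{1/(4\nu_{j}^{2})\}$. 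These exhaust the spectrum, because taking $x\to 0$ in Lemma~\ref{lem2.1} gives $\det(I-B)\neq 0$, so the characteristic polynomial in $\lambda$ has full degree $N$.

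For the interlacing order, Rolle's theorem again places a $\nu_{j}$ in each interval $(\mu_{j+1},\mu_{j})$ of consecutive roots of $U_{N+1}$, yielding $\mu_{1}>\nu_{1}>\mu_{2}>\nu_{2}>\cdots$. The two parity cases differ only in the tail. When $N$ is odd, $\mu_{(N+1)/2}$ is the smallest positive root of $U_{N+1}$ and there is no positive root of $U'_{N+1}$ below it, so the chain ends $\cdots>\nu_{(N-1)/2}>\mu_{(N+1)/2}>0$. When $N$ is even, $\mu_{N/2+1}=\cos(\pi/2)=0$, and Rolle forces a root $\nu_{N/2}$ of $U'_{N+1}$ into $(0,\mu_{N/2})$, so the chain ends $\cdots>\mu_{N/2}>\nu_{N/2}>0$. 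Applying the decreasing map $x\mapsto 1/(4x^{2})$ reverses each chain and produces exactly the two orderings stated.

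The only real obstacle is the careful bookkeeping of parities and tail positions in the two cases; beyond that, the argument uses nothing more than Rolle's theorem and the elementary symmetry of Chebyshev polynomials.
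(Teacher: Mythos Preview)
Your argument is correct and follows essentially the same route as the paper: both proofs read the characteristic numbers off the factorization in Lemma~\ref{lem2.1} and invoke the interlacing of the zeros of $U_{N+1}$ and $U'_{N+1}$. You are simply more explicit than the paper on two points the authors leave implicit---the verification that $\det(I-B)\neq 0$ (so that the pencil is regular with exactly $N$ characteristic numbers) and the parity bookkeeping that pins down the tail of the interlacing chain in each case.
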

\begin{proof}
The determinant $\det\left(4x^2(I+A)-(I-B)\right)$ is a polynomial of degree $2N$ with zeros
${\left\{\pm\mu_j\right\}}_{j=1}^{\lfloor\bfrac{N+1}{2}\rfloor}$, ${\left\{\pm\nu_j\right\}}_{j=1}^{N-\lfloor\bfrac{N+1}{2}\rfloor}$,
where
 $\mu_j=\cos\frac{j\pi}{N+2}$, $U'_{N+1}(\nu_j)=0$, which can be arranged so that
$$
-\mu_1<-\nu_1<\cdots<-\nu_{\bfrac{N-1}{2}}<-\mu_{\bfrac{N+1}{2}}\\<0<\mu_{\bfrac{N+1}{2}}<
\nu_{\bfrac{N-1}{2}}<\cdots<\nu_1<\mu_1
$$
if $N$ is odd, and
$$
-\mu_1<-\nu_1<\cdots<-\mu_{N/2}<-\nu_{N/2}<0<\nu_{N/2}<
\mu_{N/2}<\cdots<\nu_1<\mu_1
$$
if $N$ is even, since the zeros of $U_{N+1}(x)$ and $U'_{N+1}(x)$ are interlaced.The characteristic numbers $\{\lambda_j\}^N_{j=1}$ of
$I+A-\lambda(I-B)$ are related to the zeros of $\det\left(4x^2(I+A)-(I-B)\right)$ by
$\lambda_{2j-1}=\frac{1}{4\mu_j^2}, \lambda_{2j}=\frac{1}{4\nu_j^2}$, $j=1,\ldots,\lfloor\bfrac{N+1}{2}\rfloor$, which gives the assertion.
\end{proof}

We now find the eigenvectors of the pencil $I+A-\lambda(I-B)$.

\begin{lemma}\label{lem2.3}
The principal eigenvectors $Z_j^{(0)}$ of the matrix pencil
$I+A-\lambda(I-\nobreak B)$, corresponding to the eigenvalues $\lambda_j=\frac{1}{4}\sec^2\frac{\pi j}{N+2}$,  are given by 
$Z_j^{(0)}=Z^{(0)}\bigl(\cos\frac{\pi j}{N+2}\big)$, $j=1,\ldots,\lfloor\bfrac{N+1}{2}\rfloor$, where
$$
Z^{(0)}(x)=\left\{(-1)^kU_{k-1}(x)U_k(x)\right\}_{k=1}^N.
$$
\end{lemma}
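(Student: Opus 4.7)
The plan is to verify directly that the proposed vector $Z_j^{(0)}$, whose $k$-th component is $Z_k=(-1)^k U_{k-1}(\mu_j)U_k(\mu_j)$, lies in the kernel of the pencil at $\lambda_j=\tfrac{1}{4\mu_j^2}$, i.e.\ that
$$\bigl(4\mu_j^2(I+A)-(I-B)\bigr)\,Z_j^{(0)} = 0.$$
Since Lemma~\ref{lem2.2} shows $\lambda_j$ is a simple characteristic number, exhibiting a single nonzero vector in this kernel identifies the principal eigenvector (in Gantmacher's sense) up to scalar multiple, which is all the lemma claims.

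The first step is to extract a scalar recurrence. Inspecting the band structure of $A$ and $B$, the action on any $Z\in\mathbb{R}^N$ is $(AZ)_k = \tfrac12(Z_{k-1}+Z_{k+1})$ and $(BZ)_k = \tfrac12(Z_{k-2}+Z_{k+2})$ under the boundary conventions $Z_{-1}=Z_0=Z_{N+1}=Z_{N+2}=0$, which encode the missing off-diagonal entries in the first two and last two rows. Writing $x=\mu_j$, the eigenvector equation reduces to the five-term recurrence
$$\tfrac12(Z_{k-2}+Z_{k+2}) + 2x^2(Z_{k-1}+Z_{k+1}) + (4x^2-1)Z_k = 0 \qquad (1\le k\le N).$$

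I would then extend the formula $Z_k=(-1)^k U_{k-1}(x)U_k(x)$ to $k\in\{-1,0,N+1,N+2\}$ and check that all four required boundary zeros hold automatically: $Z_0$ and $Z_{-1}$ vanish because $U_{-1}=0$, while $Z_{N+1}$ and $Z_{N+2}$ vanish precisely because $U_{N+1}(\mu_j)=0$ by the defining property of $\mu_j$. Consequently the entire system reduces to verifying the recurrence as an identity in $k$ for a fixed $x=\cos\theta$.

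For the bulk identity I would use the product-to-sum formula
$U_{k-1}(\cos\theta)U_k(\cos\theta) = \frac{\cos\theta-\cos(2k+1)\theta}{2\sin^2\theta}$
to rewrite each $Z_k$; the $k$-independent ``$\cos\theta$'' contribution cancels on inspection, and after pairing symmetric terms via $\cos(2k+1-2m)\theta+\cos(2k+1+2m)\theta = 2\cos(2k+1)\theta\cos(2m\theta)$ for $m=1,2$, the recurrence collapses to the elementary identity $4\cos^2\theta\cos 2\theta - \cos 4\theta = 4\cos^2\theta - 1$, verified by the double-angle formula. The one step that requires care is matching the boundary conventions to the first two and last two rows of $A$ and $B$; in particular, the condition $Z_{N+1}=0$ is exactly what singles out $x=\mu_j$ from among all values making the bulk recurrence pass. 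Beyond that, the proof is routine bookkeeping.
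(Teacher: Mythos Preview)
Your verification is correct: the five-term recurrence you derive is the right coordinatewise form of $\bigl(4x^2(I+A)-(I-B)\bigr)Z=0$, the boundary values $Z_{-1},Z_0,Z_{N+1},Z_{N+2}$ all vanish for the extended formula at $x=\mu_j$, and the bulk identity reduces (as you show) to $4\cos^2\theta\cos2\theta-\cos4\theta=4\cos^2\theta-1$, which is elementary.

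The paper, however, does not carry out this computation. It instead invokes the result proved in~\cite{3} that the \emph{unsigned} vector $\hat Z^{(0)}(x)=\{U_{k-1}(x)U_k(x)\}_{k=1}^N$ is annihilated by the companion pencil $4x^2(I-A)-(I-B)$ at $x=\cos\frac{j\pi}{N+2}$, and then transfers this to the signed vector and the pencil with $I+A$ via the diagonal conjugation $D=\operatorname{diag}((-1)^k)$ (the same observation that underlies Lemma~\ref{lem2.1}, since $DAD=-A$ and $DBD=B$). So the paper's argument is a two-line reduction to prior work, while yours is a self-contained trigonometric verification. Your route has the advantage of being independent of~\cite{3}; the paper's route has the advantage of making the $A\leftrightarrow -A$ symmetry explicit and reusing machinery already set up.
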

\begin{proof}
Consider the vector $\hat{Z}^{(0)}(x)=\left\{U_{k-1}(x)U_k(x)\right\}_{k=1}^N$. In \cite{3} it was proved that
$\left(4x^2(I-A)-(I-B)\right)\hat{Z}^{(0)}(x)=0$ for
$x=\cos\frac{j\pi}{N+2}$, $j=1,\ldots,\lfloor\bfrac{N+1}{2}\rfloor$.
Consequently,
\begin{multline*}
\left(\left(I+A\right)-\lambda_j\left(I-B\right)\right)Z^{(0)}(x)  \\
= -\frac{1}{4\cos^2\frac{\pi j}{N+2}}\left(4\cos^2\frac{\pi j}{N+2}(I-A)-(I-B)\right) \hat{Z}^{(0)}(x)=0.\qedhere
\end{multline*}
\end{proof}

\begin{lemma}\label{lem2.4}
Let $\nu_j$ be a positive root of the polynomial $U'_{N+1}(x)$. 
The principal vectors $Z_j^{(1)}$ corresponding to the characteristic numbers
$\lambda_j=\frac{1}{4\nu_j^2}$ of the matrix pencil $I+A-\lambda(I-B)$ are given by $Z_j^{(1)}=Z^{(1)}(\nu_j)$, $ j=1,\ldots,\lfloor\xfrac{N}{2}\rfloor$, where
\begin{multline}\label{e4}
Z^{(1)}(x)=\biggl\{(-1)^{k-1}( U_{k-1}(x)U_k(x)-U_{N-k}(x)U_{N-k+1}(x) \\
                          +\frac{N+1-2k}{N+1}U_N(x)U_{N+1}(x) )\bigg\}_{k=1}^N.
\end{multline}
\end{lemma}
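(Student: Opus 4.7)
Mirroring the proof of Lemma~\ref{lem2.3}, the row/column sign-flip identity exploited in Lemma~\ref{lem2.1} reduces the eigenvector equation $(I+A-\lambda_j(I-B))Z^{(1)}(\nu_j)=0$ with $\lambda_j=1/(4\nu_j^2)$ to $M(\nu_j)\tilde Z^{(1)}(\nu_j)=0$, where $M(x):=4x^2(I-A)-(I-B)$ and $\tilde Z^{(1)}_k(x):=(-1)^{k-1}Z^{(1)}_k(x)$ has the alternating sign removed. The plan is to split
$$
\tilde Z^{(1)}(x)=P(x)+Q(x)+R(x),\quad P_k=U_{k-1}U_k,\ Q_k=-U_{N-k}U_{N-k+1},\ R_k=\tfrac{N+1-2k}{N+1}U_NU_{N+1},
$$
and verify $M(\nu_j)\tilde Z^{(1)}(\nu_j)=0$ summand by summand.

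The crux is that each of $P$, $Q$, $R$, extended to $k\in\mathbb{Z}$, satisfies the underlying homogeneous five-term recurrence associated with $M(x)$. For $P$ this follows from the Chebyshev identity $2\sin^2\theta\,U_{k-1}(x)U_k(x)=\cos\theta-\cos((2k+1)\theta)$ with $x=\cos\theta$, combined with the fact that the characteristic roots of the recurrence are $1$ (double) and $e^{\pm 2i\theta}$; $Q$ is an index-reflection of $P$; and the linear sequence $c_k=(N+1-2k)/(N+1)$ in $R$ is annihilated by the double root $r=1$. Consequently $M(x)(P+Q+R)|_k=0$ at every interior row $3\le k\le N-2$, and only the boundary rows $k\in\{1,2,N-1,N\}$ can contribute, through the ``ghost'' values $v_{-1},v_0,v_{N+1},v_{N+2}$ that the matrix sets to zero. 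A short explicit calculation produces
$$
M(x)P|_{N-1}=-\tfrac12 U_NU_{N+1},\qquad M(x)P|_N=2x^2U_NU_{N+1}-\tfrac12U_{N+1}U_{N+2},
$$
with symmetric formulas for $Q$ at $k=1,2$. The coefficient $(N+1-2k)/(N+1)$ in $R$ is chosen precisely so that its ghost values $c_0=1,\ c_{N+1}=-1$ cancel the $\tfrac12 U_NU_{N+1}$ contributions from $P+Q$ at rows $2$ and $N-1$.

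After these cancellations, the residual contributions at $k=1$ and (by the antisymmetry $\tilde Z^{(1)}_{N+1-k}=-\tilde Z^{(1)}_k$, preserved by the reflection-symmetric $M(x)$) $k=N$ reduce to
$$
M(x)\tilde Z^{(1)}(x)\big|_1=\frac{U_{N+1}(x)}{N+1}\bigl[(N+1)xU_{N+1}(x)-(N+2)U_N(x)\bigr]
$$
and its negative. Applying $T_{N+2}(x)=xU_{N+1}(x)-U_N(x)$ together with the classical identity $(x^2-1)U'_{N+1}(x)=(N+2)T_{N+2}(x)-xU_{N+1}(x)$ rewrites the bracket as $(x^2-1)U'_{N+1}(x)$, which vanishes precisely at the roots $\nu_j$ of $U'_{N+1}$. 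The main obstacle is the boundary bookkeeping at the four rows $k=1,2,N-1,N$; once the cancellations at $k=2,N-1$ are in hand, the equality at $k=1,N$ is just the Chebyshev derivative identity in disguise.
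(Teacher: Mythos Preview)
Your proposal is correct and follows essentially the same route as the paper: verify that the interior rows of the linear system vanish identically in $x$, then reduce the remaining boundary rows to the expression $\tfrac{x^2-1}{N+1}\,U_{N+1}(x)\,U'_{N+1}(x)$, which kills $x=\nu_j$. The paper does this directly with $4x^2(I+A)-(I-B)$ and the signed coordinates $z_k$, asserting the interior vanishing ``directly'' and obtaining the boundary expression ``after some calculations''; you instead pass via the diagonal sign-flip (as in Lemma~\ref{lem2.3}) to $M(x)=4x^2(I-A)-(I-B)$, make the five-term recurrence explicit through its characteristic roots $1,1,e^{\pm 2i\theta}$, and split $\tilde Z^{(1)}=P+Q+R$ to isolate the ghost contributions---this is a more transparent bookkeeping of the same computation, not a different idea.
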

\begin{proof}
Denote
\begin{multline*}
z_k(x)=(-1)^{k-1}\biggl( U_{k-1}(x)U_k(x)-U_{N-k}(x)U_{N-k+1}(x)  \\
            +\frac{N+1-2k}{N+1}U_N(x)U_{N+1}(x) \bigg).
\end{multline*}
Writing the system $\left(4x^2(I+A)-(I-B)\right)Z^{(1)}(x)=0$ coordinatewise we can see directly that all the left-hand sides of the equations of the system, from the second up to the penultimate equation, are identically zero for all~$x$.  The first equation (coinciding with the last one) is
\begin{equation}\label{e5}
(4x^2-1)z_1(x)+2x^2z_2(x)+\frac{1}{2}z_3(x)=0.
\end{equation}
We use the formula (see \cite{3})
\begin{equation}\label{e6}
\frac{d}{dx}U_k(x)=\frac{1}{2(1-x^2)}\left((k+2)U_{k-1}(x)-kU_{k+1}(x)\right).
\end{equation}
After some calculations we obtain
\begin{align*}
(4x^2-1)z_1(x)+2x^2z_2(x)+\frac{1}{2}z_3(x)&=-U_{N+1}(x)\left(\frac{N+2}{N+1}U_N(x)-xU_{N+1}(x)\right) \\
                                                                      &=\frac{-1+x^2}{N+1}U_{N+1}(x)U'_{N+1}(x).
\end{align*}
Consequently, the right-hand side of \eqref{e5} is zero at $x=\nu_j$.
\end{proof}

\begin{corollary}\label{cor2.4.1}
The principal vectors corresponding to the characteristic numbers $\lambda_j=\frac{1}{4}\sec^2\frac{\pi j}{N+2}$
of the matrix pencil $I+A-\lambda(I-B)$ can be written in the form $Z_j^{(0)}= (1/2) Z^{(1)}\bigl(\cos\frac{\pi j}{N+2}\big)$, $j=1,\ldots,
\lfloor\bfrac{N+1}{2}\rfloor$, where the vector $Z^{(1)}(x)$ is given by \eqref{e4}.
\end{corollary}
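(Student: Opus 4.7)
My plan is to establish $Z_j^{(0)} = \tfrac{1}{2}Z^{(1)}(\mu_j)$, where $\mu_j := \cos\frac{\pi j}{N+2}$, by direct coordinate-wise evaluation of the formula \eqref{e4}. First I would compute each entry $z_k(\mu_j)$ and show that it reduces to $2(-1)^k U_{k-1}(\mu_j) U_k(\mu_j)$, which is $2$ times the $k$th coordinate of $Z^{(0)}(\mu_j)$ as given in Lemma~\ref{lem2.3}.

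Two standard facts drive the reduction. Since $\mu_j$ is a zero of $U_{N+1}$, the third term $\frac{N+1-2k}{N+1}U_N(\mu_j)U_{N+1}(\mu_j)$ in \eqref{e4} vanishes immediately. Then, writing $\theta_j = \pi j/(N+2)$ so that $(N+2)\theta_j = \pi j$, the identity $\sin(\pi j - \alpha) = (-1)^{j+1}\sin\alpha$ gives the symmetry
\[
U_{N-m}(\mu_j) = (-1)^{j+1}U_m(\mu_j)
\]
for every integer $m$, and hence
\[
U_{N-k}(\mu_j)\,U_{N-k+1}(\mu_j) = U_{k-1}(\mu_j)\,U_k(\mu_j).
\]
Applying this to the second term of $z_k(\mu_j)$ and combining with the vanishing of the third, I would read off the stated proportionality by direct inspection of the resulting expression.

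As a structural sanity check, I would reuse the computation from the proof of Lemma~\ref{lem2.4}: every coordinate equation of $\left(4x^2(I+A)-(I-B)\right)Z^{(1)}(x)=0$ is identically zero except the first (and last, which coincides with it), and that equation's right-hand side equals $\frac{x^2-1}{N+1}U_{N+1}(x)U'_{N+1}(x)$. This vanishes not only at $x=\nu_j$ but also at $x=\mu_j$. Therefore $Z^{(1)}(\mu_j)$ lies in the kernel of $4\mu_j^2(I+A)-(I-B)$, which by Lemma~\ref{lem2.2} is one-dimensional and spanned by $Z^{(0)}(\mu_j)$. So $Z^{(1)}(\mu_j)$ is forced to be a scalar multiple of $Z^{(0)}(\mu_j)$, and the explicit calculation above pins that scalar down to $2$.

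The main delicate point will be the sign bookkeeping. Three independent sources of $\pm 1$ enter the calculation --- the $(-1)^{k-1}$ prefactor in $z_k$, the $(-1)^{j+1}$ coming from the Chebyshev symmetry, and the $(-1)^k$ alternation in $Z^{(0)}$ --- and they must be juggled consistently for the first two terms of $z_k(\mu_j)$ to combine correctly and for the resulting constant to come out as $\tfrac12$ rather than as an incorrect value. Writing the computation at the level of sines via $U_m(\cos\theta) = \sin((m+1)\theta)/\sin\theta$, and only at the end converting back to $U_m$'s, seems the safest way to avoid such sign slips.
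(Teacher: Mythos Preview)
Your argument has a fatal sign slip. You correctly derive $U_{N-m}(\mu_j)=(-1)^{j+1}U_m(\mu_j)$ and hence
\[
U_{N-k}(\mu_j)\,U_{N-k+1}(\mu_j)
=\bigl((-1)^{j+1}\bigr)^{2}\,U_{k}(\mu_j)\,U_{k-1}(\mu_j)
=U_{k-1}(\mu_j)\,U_{k}(\mu_j).
\]
But in \eqref{e4} the second product is \emph{subtracted} from the first. Combined with $U_{N+1}(\mu_j)=0$ this gives
\[
z_k(\mu_j)=(-1)^{k-1}\Bigl(U_{k-1}(\mu_j)U_k(\mu_j)-U_{k-1}(\mu_j)U_k(\mu_j)+0\Bigr)=0
\]
for every $k$; that is, $Z^{(1)}(\mu_j)=0$, not $2Z^{(0)}(\mu_j)$. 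A concrete check: for $N=3$, $j=1$ one has $\mu_1=\cos\frac{\pi}{5}$, $U_0=1$, $U_1=U_2=2\cos\frac{\pi}{5}$, $U_3=1$, $U_4=0$, and each coordinate of $Z^{(1)}(\mu_1)$ visibly vanishes while $Z^{(0)}(\mu_1)\neq 0$.

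Your structural sanity check is in fact consistent with this outcome rather than with the one you want: $Z^{(1)}(\mu_j)$ does lie in the one-dimensional kernel of $4\mu_j^{2}(I+A)-(I-B)$, but as the zero vector, so no nonzero proportionality constant can be extracted. The paper supplies no proof of the corollary, and with formula~\eqref{e4} as written the identity $Z_j^{(0)}=\tfrac{1}{2}Z^{(1)}(\mu_j)$ cannot hold for any nonzero constant in place of $\tfrac{1}{2}$. What the paper actually relies on downstream (see the proof of Theorem~3) is the representation from Lemma~\ref{lem2.3}, not this one; the corollary appears to be misstated.
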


\begin{lemma}\label{lem2.5}
For all $k=1,\ldots,N$,
$$
2\sum_{j=1}^{N-k}U_j(x)U_{j+k-1}(x)=\frac{1}{1-x^2}\left((N-k)T_{k-1}(x)-T_{N+2}(x)U_{N-k-1}(x)\right),
$$
where $T_j(x)=T_j(\cos t)=\cos jt=2^{j-1}x^j+\cdots$ is a Chebyshev polynomial of the first kind.
\end{lemma}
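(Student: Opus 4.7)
The plan is to prove the identity by the substitution $x=\cos t$, reducing everything to elementary trigonometric identities.

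First, I would apply the product-to-sum formula to each summand. Since $U_j(\cos t)=\sin((j+1)t)/\sin t$, a direct computation gives
$$
2U_j(x)U_{j+k-1}(x)=\frac{2\sin((j+1)t)\sin((j+k)t)}{\sin^2 t}=\frac{\cos((k-1)t)-\cos((2j+k+1)t)}{1-x^2}=\frac{T_{k-1}(x)-T_{2j+k+1}(x)}{1-x^2}.
$$
Summing this over $j=1,\ldots,N-k$ produces
$$
2\sum_{j=1}^{N-k}U_j(x)U_{j+k-1}(x)=\frac{1}{1-x^2}\bigg((N-k)T_{k-1}(x)-\sum_{j=1}^{N-k}T_{2j+k+1}(x)\bigg),
$$
so the identity reduces to showing
$$
\sum_{j=1}^{N-k}T_{2j+k+1}(x)=T_{N+2}(x)U_{N-k-1}(x).
$$

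Next, I would evaluate the remaining sum as an arithmetic progression of cosines. Writing $T_{2j+k+1}(\cos t)=\cos((k+1)t+2jt)$ and using the standard identity
$$
\sum_{j=1}^{M}\cos(\alpha+j\beta)=\frac{\sin(M\beta/2)}{\sin(\beta/2)}\cos\!\Bigl(\alpha+\frac{(M+1)\beta}{2}\Bigr),
$$
with $\alpha=(k+1)t$, $\beta=2t$, $M=N-k$, gives
$$
\sum_{j=1}^{N-k}\cos((2j+k+1)t)=\frac{\sin((N-k)t)}{\sin t}\cos((N+2)t)=U_{N-k-1}(x)T_{N+2}(x).
$$
Substituting this back yields exactly the right-hand side of the claimed identity.

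There is no real obstacle here beyond careful bookkeeping: one must verify that the index range $j=1,\ldots,N-k$ corresponds correctly to the arithmetic progression of frequencies $\{k+3,k+5,\ldots,2N-k+1\}$ and that the endpoint cases ($k=N$, where the sum on the left is empty and $U_{-1}\equiv 0$, and $k=1$, where $T_0=1$) are consistent with the stated formula. Both of these are immediate from the conventions $U_{-1}(x)=0$ and $T_0(x)=1$, so the proof is entirely a routine application of trigonometric identities.
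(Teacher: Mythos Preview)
Your proof is correct and follows essentially the same route as the paper: both substitute $x=\cos t$ and reduce the sum to a geometric progression. The paper carries this out in the variable $\xi=e^{it}$, while you spell out the equivalent real trigonometric identities (product-to-sum and the arithmetic-progression cosine sum), but the underlying argument is identical.
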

\begin{proof}
Set $\xi=e^{it}$. Put $x=\cos t$, i.e. $x=\frac{1}{2}(\xi+\xi^{-1})$, 
$U_k(x)=\frac{\xi^{k+1}-\xi^{-(k+1)}}{\xi-\xi^{-1}}$, $T_k(x)=\frac{1}{2}(\xi^k+\xi^{-k})$ and use the formula for the sum of a geometric progression.
\end{proof}

\begin{corollary}\label{cor2.5.1}
Let $\mu=\cos\frac{\pi(N+1)}{2(N+2)}$. For all $k=1,\ldots,N$,
$$
2\sum_{j=1}^{N-k}U_j(\mu)U_{j+k-1}(\mu)=\frac{1}{1-\mu^2}\bigl((N-k)\left(\mu U_{k-2}(\mu)-U_{k-3}(\mu)\right)+U_{k+1}(\mu)\big).
$$
\end{corollary}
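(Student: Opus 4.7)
The plan is to apply Lemma~\ref{lem2.5} at $x=\mu$ and to rewrite the two first-kind Chebyshev factors on the right-hand side in terms of second-kind polynomials. This reduces the corollary to the two identities (i) $T_{k-1}(\mu)=\mu U_{k-2}(\mu)-U_{k-3}(\mu)$ and (ii) $-T_{N+2}(\mu)U_{N-k-1}(\mu)=U_{k+1}(\mu)$.

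Identity (i) is in fact general, independent of the choice of~$\mu$. Writing $x=\cos\theta$ and using the product-to-sum relations $2\cos\theta\sin((k-1)\theta)=\sin(k\theta)+\sin((k-2)\theta)$ and $\sin(k\theta)-\sin((k-2)\theta)=2\sin\theta\cos((k-1)\theta)$, one reads off $xU_{k-2}(x)-U_{k-3}(x)=\cos((k-1)\theta)=T_{k-1}(x)$.

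For identity (ii), I would first establish the general formula $2T_m(x)U_n(x)=U_{m+n}(x)-U_{m-n-2}(x)$ (which follows in one line from $2\cos(m\theta)\sin((n+1)\theta)=\sin((m+n+1)\theta)-\sin((m-n-1)\theta)$) and apply it with $m=N+2$, $n=N-k-1$ to get $2T_{N+2}(x)U_{N-k-1}(x)=U_{2N-k+1}(x)-U_{k+1}(x)$. Setting $\mu=\cos\theta$ with $\theta=\frac{\pi(N+1)}{2(N+2)}$, the relation $(2N+4)\theta=\pi(N+1)$ gives $(2N-k+2)\theta=\pi(N+1)-(k+2)\theta$ and hence $\sin((2N-k+2)\theta)=(-1)^N\sin((k+2)\theta)$, i.e.\ $U_{2N-k+1}(\mu)=(-1)^N U_{k+1}(\mu)$. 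In the odd-$N$ regime in which the corollary is applied (so that $\mu$ is the smallest positive root of $U_{N+1}$), this sign is $-1$, and identity (ii) follows.

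The only delicate step is the parity-dependent sign $(-1)^N$ produced by the angle reflection, since nothing else in the calculation is more than a standard product-to-sum manipulation. The main obstacle is therefore just to isolate this parity cleanly and match it to the intended odd-$N$ setting, in which $\mu$ coincides with $\mu_{(N+1)/2}$ from Lemma~\ref{lem2.2}.
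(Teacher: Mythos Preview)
Your proposal is correct and follows essentially the same route as the paper: apply Lemma~\ref{lem2.5}, replace $T_{k-1}$ by $\mu U_{k-2}-U_{k-3}$ via the standard relation $T_{n+1}=xU_n-U_{n-1}$, and use the reflection symmetry at $\mu$ (the paper records it as $U_{N-j}(\mu)=(-1)^{(N-1)/2}U_j(\mu)$, equivalent to your $(-1)^N$ sign after expanding $2T_{N+2}U_{N-k-1}$). Your explicit isolation of the odd-$N$ parity is a welcome clarification of what the paper leaves implicit in the exponent $(N-1)/2$.
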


\begin{proof} This follows from  the relations $T_{n+1}=xU_n(x)-U_{n-1}(x)$ and
 $U_{N-j}(\mu)=(-1)^{\bfrac{N-1}{2}}U_j(\mu)$.
\end{proof}

We now localize the least positive root of  $U'_{N+1}(x)$.

\begin{lemma}\label{lem2.6}
Suppose $N$ is even and  $\nu$ is the minimal positive root of $U'_{N+1}(x)$. Then $\nu\in(\cos \tau_1, \cos \tau_2)$,
where $\tau_1=\frac{(N+1)\pi}{2(N+2)}$ and $ \tau_2=\frac{N\pi}{2(N+1)}$.
\end{lemma}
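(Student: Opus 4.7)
The idea is to trap $\nu$ between two consecutive zeros of $U_{N+1}$ via Rolle's theorem, and then detect a sign change of $U'_{N+1}$ across $(\cos\tau_1,\cos\tau_2)$.

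Since $N$ is even, the index $j=(N+2)/2$ in the enumeration of zeros of $U_{N+1}$ from Lemma~\ref{lem2.2} produces $\cos\frac{j\pi}{N+2}=0$, and the next larger zero of $U_{N+1}$ is $\cos\frac{N\pi}{2(N+2)}$. By Rolle's theorem there is exactly one zero of $U'_{N+1}$ in $\bigl(0,\cos\frac{N\pi}{2(N+2)}\bigr)$, which must be the minimal positive root $\nu$. A short angle comparison gives $\frac{N\pi}{2(N+2)}<\tau_2<\tau_1<\frac{\pi}{2}$, so $0<\cos\tau_1<\cos\tau_2<\cos\frac{N\pi}{2(N+2)}$, placing both candidate endpoints inside this Rolle interval. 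It therefore suffices, by the intermediate value theorem, to verify that $U'_{N+1}(\cos\tau_1)$ and $U'_{N+1}(\cos\tau_2)$ have opposite signs.

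For the sign computation I would differentiate the identity $\sin((N+2)t)=\sin t\cdot U_{N+1}(\cos t)$ in $t$ to obtain
$$(1-x^2)U'_{N+1}(x)=xU_{N+1}(x)-(N+2)T_{N+2}(x).$$
At $x=\cos\tau_1$ the angle $(N+2)\tau_1=\frac{N+1}{2}\pi$ is an odd multiple of $\pi/2$, so the $T_{N+2}$ term vanishes; writing $m=N/2$, one reads off $U'_{N+1}(\cos\tau_1)=(-1)^m\cos\tau_1/\sin^3\tau_1$, of sign $(-1)^m$. At $x=\cos\tau_2$ the complementary angle identities $\tau_2=\pi/2-\frac{\pi}{2(N+1)}$ and $(N+2)\tau_2=\frac{N+1}{2}\pi-\frac{\pi}{2(N+1)}$ yield the clean evaluations $U_{N+1}(\cos\tau_2)=(-1)^m$ and $T_{N+2}(\cos\tau_2)=(-1)^m\cos\tau_2$, whereupon the numerator telescopes to $(-1)^{m+1}(N+1)\sin\frac{\pi}{2(N+1)}$, of sign $(-1)^{m+1}$. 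With opposite signs in hand, the intermediate value theorem produces a root of $U'_{N+1}$ in $(\cos\tau_1,\cos\tau_2)$, which is necessarily $\nu$.

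The main obstacle is the computation at $\cos\tau_2$: the key is to recognize the resonance that $\tau_2$ and $(N+2)\tau_2$ differ from $\pi/2$ and $\frac{N+1}{2}\pi$ respectively by the \emph{same} small angle $\frac{\pi}{2(N+1)}$, which is what lets $U_{N+1}(\cos\tau_2)$ collapse to $\pm 1$ and the factor $N+1$ emerge cleanly after subtracting the two pieces of the numerator. Once that coincidence is spotted, the rest is routine bookkeeping.
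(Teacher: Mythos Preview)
Your proof is correct and follows essentially the same strategy as the paper's: confine the unique critical point of $U_{N+1}$ to the gap $\bigl(0,\cos\frac{N\pi}{2(N+2)}\bigr)$ and then detect a sign change of $U'_{N+1}$ across $(\cos\tau_1,\cos\tau_2)$. The only differences are cosmetic---the paper secures uniqueness via convexity of $U_{N+1}$ rather than Rolle, and computes the endpoint signs through formula~\eqref{e6}, i.e.\ $2(1-x^2)U'_{N+1}(x)=(N+3)U_N(x)-(N+1)U_{N+2}(x)$, in place of your identity $(1-x^2)U'_{N+1}(x)=xU_{N+1}(x)-(N+2)T_{N+2}(x)$.
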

\begin{proof}
The polynomial $U_{N+1}(x)$ does not change its convexity on the interval $\bigl(0, \cos\frac{N\pi}{2(N+2)}\big)$, in particular on
 $(\cos\tau_1, \cos\tau_2)$. Consequently, $U'_{N+1}(x)$ is monotone on that interval. We use formula \eqref{e6}.
 The signs of $U'_{N+1}(x)$ at the endpoints of the latter interval coincide with the signs of the function $\Phi(t)=(N+3)\sin\,(N+1)t-(N+1)\sin\,(N+3)t$
at the endpoints of $(\tau_2, \tau_1)$.
We compute $\Phi(\tau_1)=(-1)^{\xfrac{N}{2}}2\sin\frac{\pi}{2(N+2)}$ and $\Phi(\tau_2)=-(-1)^{\xfrac{N}{2}}(N+1)\sin\frac{\pi}{N+1}$,
so the signs of $U'_{N+1}(\cos\tau_1)$ and $ U'_{N+1}(\cos\tau_2)$ are different, completing the proof.
\end{proof}

Let us describe some properties of the coordinates of the principal vector~\eqref{e4}.

\begin{lemma}\label{lem2.7}
For  $N$ even, let $\nu$ be the least positive root of $U'_{N+1}(x)$, and set
\begin{multline*}
z_k(x)=(-1)^{k-1}\biggl( U_{k-1}(x)U_k(x)-U_{N-k}(x)U_{N-k+1}(x) \\ 
          +\frac{N+1-2k}{N+1}U_N(x)U_{N+1}(x) \bigg),\quad k=1,\ldots,N.
\end{multline*}
Then
\begin{enumerate}[label=(\alph*)]
  \item $z_k(\nu)=z_{N-k+1}(\nu)$, $k=1,\ldots,\xfrac{N}{2}$;
  \item $z_{k+1}(\nu)>z_k(\nu)>0$, $k=1,\ldots,\xfrac{N}{2}-1$.
\end{enumerate}
\end{lemma}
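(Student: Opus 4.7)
Part~(a) is a formal identity in $x$; the critical-point assumption $U'_{N+1}(\nu)=0$ is not needed. Substituting $k\mapsto N-k+1$ in the definition of $z_k(x)$ swaps the two products $U_{k-1}U_k$ and $U_{N-k}U_{N-k+1}$, flips the sign of the coefficient $(N+1-2k)/(N+1)$, and changes the prefactor from $(-1)^{k-1}$ to $(-1)^{N-k}$. For $N$ even, $(-1)^{N-k}=-(-1)^{k-1}$, and these three sign flips combine to give $z_{N-k+1}(x)\equiv z_k(x)$; this proves (a).

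For (b) I first rewrite $z_k(\nu)$ trigonometrically. With $\theta_0=\arccos\nu$, applying the identity $U_{p-1}(\cos\theta)U_p(\cos\theta)=(\cos\theta-\cos((2p+1)\theta))/(2\sin^2\theta)$ and then product-to-sum yields
\[
z_k(\nu)=\frac{(-1)^{k-1}\sin((N+2)\theta_0)}{(N+1)\sin^2\theta_0}\bigl[(N+1-2k)\sin((N+1)\theta_0)-(N+1)\sin((N-2k+1)\theta_0)\bigr].
\]
Now set $N=2L$ and $\delta=\pi/2-\theta_0$. Lemma~\ref{lem2.6} gives $\delta\in(\pi/(4L+4),\pi/(4L+2))$, so $u\delta\in(0,\pi/2)$ for every odd $u\in\{1,3,\ldots,2L+1\}$ and $(2L+2)\delta\in(\pi/2,\pi)$. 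Using $\sin((2j+1)(\pi/2-\delta))=(-1)^j\cos((2j+1)\delta)$ and $\sin((2L+2)(\pi/2-\delta))=(-1)^L\sin((2L+2)\delta)$, all $(-1)$ factors collapse into a single $(-1)^{k-1}$, leaving
\[
z_k(\nu)=\frac{\sin((2L+2)\delta)}{(2L+1)\sin^2\theta_0}\Bigl[(2L+1)\cos((2L-2k+1)\delta)+(-1)^{k-1}(2L-2k+1)\cos((2L+1)\delta)\Bigr],
\]
in which every named trigonometric factor is strictly positive.

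Next I extract a clean algebraic identity from the critical-point condition. The equation $U'_{N+1}(\nu)=0$ reads $\tan((N+2)\theta_0)=(N+2)\tan\theta_0$; writing $(N+2)\theta_0=(L+1)\pi-(2L+2)\delta$ and $\tan\theta_0=\cot\delta$, it becomes $\sin((2L+2)\delta)\sin\delta+(2L+2)\cos\delta\cos((2L+2)\delta)=0$. Combined with the angle-subtraction identity $\cos((2L+1)\delta)=\cos\delta\cos((2L+2)\delta)+\sin\delta\sin((2L+2)\delta)$, this yields the key relation
\[
\frac{\cos((2L+1)\delta)}{2L+1}=\frac{\sin((2L+2)\delta)\sin\delta}{2L+2}. \qquad(\star)
\]

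With the bracket form above, positivity and monotonicity each split by the parity of $k$. For positivity, odd $k$ is immediate (both summands positive), while even $k$ reduces to $\cos((2L-2k+1)\delta)/(2L-2k+1)>\cos((2L+1)\delta)/(2L+1)$, i.e.\ to the strict decrease of $F(u)=\cos(u\delta)/u$ on $[1,2L+1]$, which holds because $F'(u)=-(u\delta\sin(u\delta)+\cos(u\delta))/u^2<0$. A parallel computation gives
\[
z_{k+1}(\nu)-z_k(\nu)=\frac{2\sin((2L+2)\delta)}{(2L+1)\sin^2\theta_0}\Bigl[(2L+1)\sin(2(L-k)\delta)\sin\delta+2(-1)^k(L-k)\cos((2L+1)\delta)\Bigr];
\]
for even $k$ both summands are positive, and for odd $k$ the inequality reduces via $(\star)$ to $\sin(v\delta)/v>\sin((2L+2)\delta)/(2L+2)$ with $v=2(L-k)\in\{2,4,\ldots,2L-2\}$, which follows from the strict decrease of $\sin x/x$ on $(0,\pi)$. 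The main obstacle is recognizing the substitution $\delta=\pi/2-\theta_0$ together with the identity $(\star)$, which convert both the positivity and the monotonicity statements into the routine monotonicities of $\cos(u\delta)/u$ and $\sin x/x$.
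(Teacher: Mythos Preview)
Your proof is correct. Part~(a) is handled just as in the paper, which simply calls it ``evident''. For part~(b), the two arguments share the same underlying mechanism but are organized differently. The paper works with Chebyshev polynomials throughout: it computes only the differences $\Delta_k=z_{k+1}(\nu)-z_k(\nu)$, simplifies them via $U_{k-1}+U_{k+1}=2xU_k$ and $U_k^2-U_{N-k}^2=-U_{N+1}U_{N-2k-1}$ together with the critical-point relation $(N+3)U_N(\nu)=(N+1)U_{N+2}(\nu)$ to obtain
\[
\Delta_k=(-1)^k\,2\nu\,U_{N+1}(\nu)\Bigl(\tfrac{N-2k}{N+2}\,U_{N+1}(\nu)-U_{N-2k-1}(\nu)\Bigr),
\]
and then determines the sign by analyzing the auxiliary functions $\psi_k(t)=\frac{\sin(N+2)t}{N+2}-\frac{\sin(N-2k)t}{N-2k}$ near $t=\pi/2$ (monotonicity of $\psi_k$ plus $\psi_k(\pi/2)=0$, with an alternating-sign bookkeeping in $k$); positivity of the $z_k$ is obtained a~posteriori from $z_0=0$ and $\Delta_0>0$. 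You instead pass immediately to trigonometric variables and make the substitution $\delta=\pi/2-\theta_0$, after which all alternating signs collapse and the argument reduces to the two textbook monotonicities of $\cos(u\delta)/u$ and $\sin x/x$. Your identity~$(\star)$ is precisely the trigonometric form of the paper's use of the critical-point condition, so the engines are the same; your packaging is cleaner, handles positivity of $z_k$ directly rather than via $z_0=0$, and avoids the somewhat delicate sign-alternation discussion the paper needs for $\psi_k$.
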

\begin{proof}
Assertion (a) is evident. Consider now the differences $\Delta_k=z_{k+1}(\nu)-z_k(\nu)$, $k=1,\ldots,\xfrac{N}{2}-1$. We compute
\begin{equation*}
  \begin{aligned}
    \Delta_k=(-1)^k\bigg( & U_{k-1}(\nu)U_{k}(\nu) + U_{k}(\nu)U_{k+1}(\nu) - U_{N-k+1}(\nu)U_{N-k}(\nu) -  \\ 
                                   -\, &U_{N-k}(\nu)U_{N-k-1}(\nu) + \frac{N+1-2k}{N+1}U_N(\nu)U_{N+1}(\nu) + \\
                                  +\, & \frac{N-1-2k}{N+1}U_{N}(\nu)U_{N+1}(\nu)
                            \bigg).
  \end{aligned}
\end{equation*}
Grouping terms and applying the formulas $U_{k-1}(x)+U_{k+1}(x)=2xU_{k}(x)$ and $U^2_{k}(x)-U^2_{N-k}(x)=-U_{N+1}(x)U_{N-2k-1}(x)$ we obtain
$$
\Delta_{k}=(-1)^{k}2U_{N+1}(\nu)\left(-\nu U_{N-2k-1}(\nu)+\frac{N-2k}{N+1}U_{N}(\nu)\right).$$ 
The identity
$(N+3)U_{N}(\nu)-(N+1)U_{N+2}(\nu)=0$ yields $\frac{N+2}{N+1}U_{N}(\nu)=\nu U_{N+1}(\nu)$, so finally
$$
\Delta_{k} = (-1)^{k} 2 \nu U_{N+1}(\nu) \left( \frac{N-2k}{N+2} U_{N+1}(\nu) - U_{N-2k-1}(\nu) \right).
$$
Consider the auxiliary functions
$$\varphi_{k}(t) = (-1)^{k}\psi_{k}(t)\sin\,(N+2)t,
$$
where $$\psi_{k}(t) = \frac{1}{N+2}\sin\,(N+2)t -\frac{1}{N-2k}\sin\,(N-2k)t, \quad k = 1,\ldots,\xfrac{N}{2}-1.
$$ 
Set $\tau=\cos\nu$. The signs of $\varphi_{k}(\tau)$
and $\Delta_{k}$ coincide. Let us find the former. 
Since $(-1)^{\xfrac{N}{2}}U_{N+1}(\nu)>0$, we have $(-1)^{\xfrac{N}{2}}\sin\,(N+2)\tau>0$. 
We compute
$\psi'_{k}(t) = -2\sin\,(N+1-k)t\cdot\sin\,(k+1)t$. 
Note that $\cos\frac{N\pi}{2(N+1)}$ is the least positive  root of 
$U_{N}(x)$. Since $N$ is even,  all the second kind Chebyshev polynomials $U_k(x)$, $k=1,\ldots,N$, have constant sign on the interval $\bigl(0, \cos\frac{N\pi}{2(N+1)}\big)$,
  and $(-1)^{j-1}U_{2j-1(x)}>0$, $(-1)^{j}U_{2j}(x)>0$, $j=1,\ldots,\xfrac{N}{2}$. This means that the function  $\psi_k(t)$ is monotone on
 $\bigl(\frac{N\pi}{2(N+1)},\frac{\pi}{2}\big)$. As $\psi_k\left(\xfrac{\pi}{2}\right)=0$, each of these functions has constant sign on  that interval, and the signs of $\psi_k(t)$ alternate as $k$ varies. If $\xfrac{N}{2}$ is even, this sequence of signs starts from a minus, and for $\xfrac{N}{2}$ odd, from a plus. Consequently, $\varphi_k(t)>0$ for
$t \in \bigl( \frac{N\pi}{2(N+1)}, \frac{\pi}{2} \big)$, which yields $\varphi_k(\tau)>0$ and $\Delta_k>0$ for $k=1,\ldots,\xfrac{N}{2}-1$. If we additionally set $z_0=0$, we finally get $0=z_0<z_1<\cdots<z_{\xfrac{N}{2}}$.
\end{proof}

\section{An auxiliary problem}

Together with \eqref{e1} consider an auxiliary problem: find
\begin{equation}\label{e7}
\hat{J}_N = \max_{a_1=1}\Big\{ \sum_{j=1}^{N} \alpha_j : \sum_{j=1}^{N} \alpha_j\sin jt \geq 0,\, t \in [0, \pi] \Big\}.
\end{equation}

\begin{theorem}
We have
$$\hat{J}_N=
\begin{dcases}
\frac{1}{4}\csc^2\frac{\pi}{2(N+2)} &\text{if $N$ is odd,}\\
\frac{1}{4}\csc^2\vartheta &\text{if $N$ is even,}
\end{dcases}
$$
where $\vartheta$ is the least positive root of the equation
$$
(N+3)\cos\,(N+1)\vartheta+(N+1)\cos\,(N+3)\vartheta=0.
$$
\end{theorem}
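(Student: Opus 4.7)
The plan is to apply Fej\'er's method: represent every non-negative sine polynomial of degree at most $N$ by a Fej\'er--Riesz factorization, rewrite the extremal problem \eqref{e7} as a Rayleigh quotient, and then read off the answer from the characteristic numbers supplied by Lemma~\ref{lem2.2}. First, if $P(t)=\sum_{j=1}^N\alpha_j\sin jt\ge 0$ on $[0,\pi]$, then $2\sin t\cdot P(t)$ is a non-negative trigonometric polynomial of degree $N+1$ on $[-\pi,\pi]$ that vanishes at $t=0,\pi$. Fej\'er--Riesz gives $2\sin t\cdot P(t)=|h(e^{it})|^2$ with $h(\pm1)=0$, so $h(z)=(z^2-1)g(z)$ and
$$
P(t)=2\sin t\cdot|g(e^{it})|^2,\qquad g(z)=\sum_{k=0}^{N-1}g_kz^k,\ g_k\in\mathbb{R}.
$$

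Next, I will expand $|g(e^{it})|^2=\sum_k g_k^2+2\sum_{m\ge 1}\bigl(\sum_k g_k g_{k+m}\bigr)\cos mt$, multiply by $2\sin t$ via $2\sin t\cos mt=\sin(m+1)t-\sin(m-1)t$, and read off the coefficients $\alpha_j$. A short telescoping calculation should then produce
$$
\alpha_1=2g^{\top}(I-B)g,\qquad \sum_{j=1}^N\alpha_j=2g^{\top}(I+A)g,
$$
with $A,B$ the matrices from \eqref{eq20} and $g=(g_0,\ldots,g_{N-1})^{\top}$. Both matrices decouple into tridiagonal blocks according to the parity of indices, from which I will verify that $I+A$ and $I-B$ are positive definite, so $\{g:\alpha_1=1\}$ is a bounded ellipsoid and the supremum in \eqref{e7} is attained.

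The problem is therefore the generalized Rayleigh quotient
$$
\hat J_N=\max_{g\ne 0}\frac{g^{\top}(I+A)g}{g^{\top}(I-B)g},
$$
whose value equals the largest characteristic number of the pencil $I+A-\lambda(I-B)$. By Lemma~\ref{lem2.2} this is $1/(4\mu_{(N+1)/2}^2)$ for $N$ odd and $1/(4\nu^2)$ for $N$ even, where $\nu$ is the least positive root of $U'_{N+1}$. Since $\mu_{(N+1)/2}=\cos\frac{(N+1)\pi}{2(N+2)}=\sin\frac{\pi}{2(N+2)}$, the odd case yields $\hat J_N=\tfrac14\csc^2\frac{\pi}{2(N+2)}$. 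In the even case, formula \eqref{e6} rewrites $U'_{N+1}(\nu)=0$ as $(N+3)U_N(\nu)-(N+1)U_{N+2}(\nu)=0$; the substitution $\nu=\sin\vartheta=\cos(\pi/2-\vartheta)$ combined with $U_k(\cos\tau)=\sin((k+1)\tau)/\sin\tau$ and the parity of $N$ transforms this to $(N+3)\cos(N+1)\vartheta+(N+1)\cos(N+3)\vartheta=0$, giving $\hat J_N=\tfrac14\csc^2\vartheta$.

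The main technical obstacle I anticipate is the matrix identification step: carrying out the telescoping bookkeeping so that the two quadratic forms in $g$ land on precisely the matrices $A$ and $B$ of \eqref{eq20}. Once that identification is in hand, the rest of the argument is routine: the variational characterization of the largest characteristic number of a positive-definite pencil delivers $\hat J_N$ via Lemma~\ref{lem2.2}, and the two cases of the stated formula follow from a short trigonometric substitution.
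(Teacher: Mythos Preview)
Your proposal is correct and follows essentially the same route as the paper: parametrize the non-negative sine polynomial via Fej\'er--Riesz, identify $\alpha_1$ and $\sum_j\alpha_j$ with the quadratic forms $g^{\top}(I-B)g$ and $g^{\top}(I+A)g$, and read off $\hat J_N$ as the largest characteristic number of the pencil from Lemma~\ref{lem2.2}, finishing with the same trigonometric substitution. The only cosmetic difference is that the paper divides $P(t)$ by $\sin t$ first and applies Fej\'er--Riesz to the resulting non-negative cosine polynomial (writing the objective as $(1+\beta_1)/(1-\beta_2)$), whereas you multiply by $2\sin t$ and factor out the double zeros of $|h|^2$ at $t=0,\pi$; these two packagings are equivalent (one small slip: $I+A$ does not decouple by parity since $A$ is already tridiagonal, but its positive definiteness follows immediately from the eigenvalues $1+\cos\frac{k\pi}{N+1}$).
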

\begin{proof}
Let $F_{N}^{(0)}(z)=\sum_{j=1}^{N}\alpha_jz^j$ be an extremal polynomial for 
\eqref{e7}. Then  $\hat{J}_N=F_{N}^{(0)}(1)$.
Further
$\Im( F_{N}^{(0)}( e^{it} ) ) = \sin t + \sum_{j=2}^{N}\alpha_j\sin jt = \beta_0\sin t\cdot( 1+ 2 \sum_{j=1}^{N-1} \beta_{j}\cos jt)$, where

\begin{equation}\label{e8}
\begin{aligned}[c]
&\beta_0 = 1 + \sum_{j=1}^{\lfloor \bfrac{N-1}{2} \rfloor} \alpha_{2j+1},\quad \beta_1=\frac{1}{\beta_0}\sum_{j=1}^{\lfloor \xfrac{N}{2} \rfloor} \alpha_{2j}, \\ 
&\beta_2 = \frac{1}{\beta_0}\sum_{j=1}^{\lfloor \bfrac{N-1}{2} \rfloor}\alpha_{2j+1},\ \ldots ,\ \beta_{N-1}=\frac{1}{\beta_0}\alpha_N.
\end{aligned}
\end{equation}
The trigonometric polynomial $1+2\sum_{j=1}^{N-1}\beta_j \cos jt$ is nonnegative on $[0, \pi]$, so
\begin{align*}
J_N = F_N^{(0)}(1) = & \sup_{\alpha_j}\Big\{ 1+\alpha_2+\alpha_3+\cdots: 1+2 \sum_{j=1}^{N-1} \beta_j \cos jt  \geq 0 \Big\}  \\ 
                               =  & \sup_{\beta_j}\bigg\{ \frac{1+\beta_1}{1-\beta_2}: 1+2 \sum_{j=1}^{N-1}\beta_j \cos jt \geq 0 \bigg\}.
\end{align*}
By the Fej\'er--Riesz theorem \cite{4} every nonnegative trigonometric polynomial can be represented as the square of the modulus of a trigonometric polynomial,  $| d_1+ d_2e^{it}+ \cdots+ d_Ne^{i(N-1)t} |^2 = 1 + 2 \sum_{j=1}^{N-1} \beta_j \cos jt$. Consequently,
\begin{equation}\label{e9}
\begin{split}
&d_1^2 + \cdots + d_N^2 = 1,\quad d_1d_2 + \cdots + d_{N-1}d_N = \beta_1,\\ 
&d_1d_3 + \cdots + d_{N-2}d_N  
= \beta_2,\ \ldots,\ d_1d_N = \beta_{N-1}.
\end{split}
\end{equation}
Then we can write $$\hat J_N = \max_{d_j}\left\{ \frac{1+d^T Ad}{1-d^T Bd}: d^T d = 1 \right\},$$ where $d = (d_1, \ldots, d_N)^T$ ($T$ denotes transposition),
A \eqref{eq20} is the $N\times N$ matrix of the quadratic form
$d_1^2 + \cdots + d_N^2 = 1$, and B \eqref{eq20} is the $N\times N$ matrix of the quadratic form $d_1d_2 + \cdots+ d_{N-1}d_N = \beta_1$.
Consequently,
$$\hat J_N = \max_{d_j}\left\{ \frac{1+d^T Ad}{1-d^T Bd} \right\} = \max_{d_j}\left\{ \frac{d^T(I+A)d}{d^T(I-B)d} \right\},$$
where $I$ is the unit matrix. The matrices $I+A$ and $I-B$ are clearly positive definite.
The problem $\hat J_N = \max_{d_j}\bigl\{ \frac{d^T(I+A)d}{d^T(I-B)d} \big\}$ can be reduced to finding generalized eigenvalues \cite{5}. Let
$\lambda_1\leq \cdots \leq \lambda_N$ be the roots of the equation
$$
\det\left( (I + A) - \lambda (I -B) \right) = 0.
$$
Then $\hat J_N = \lambda_N$. Note that $ \lambda_1 > 0$ by the positive definiteness of  $I + A$ and $I -B$. The relevant maximum is attained at a generalized eigenvector
 $Z$ that can be found from the relation $(I + A)Z = \lambda_N(I - B)Z$ \cite{5}.

By Lemmas \ref{lem2.1} and \ref{lem2.2} the roots of
$$
\det\left( 4x^2(I + A) - (I - B) \right) = 0
$$
are $\left\{  \pm \mu_j \right\}_{j=1}^{\lfloor\bfrac{N+1}{2}\rfloor}$, $\left\{  \pm \nu_j \right\}_{j=1}^{N - \lfloor\bfrac{N+1}{2}\rfloor}$, where
$\mu_j = \cos \frac{j \pi}{N+2}$, $U'_{N+1}(\nu_j) = 0$, and they can be arranged so that 
\begin{alignat*}{2}
&0 < \mu_{\bfrac{N+1}{2}} < \nu_{\bfrac{N-1}{2}} < \cdots < \nu_1 < \mu_1 \quad & &\text{for  $N$ odd},\\
&0 < \nu_{\xfrac{N}{2}} < \mu_{\xfrac{N}{2}} < \cdots < \nu_1 < \mu_1           &  &\text{for $N$ even.}
\end{alignat*}
Consequently, for $N$ odd we have $\lambda_N = \frac{1}{4\mu^2}$, where $\mu = \cos \frac{\frac{N+1}{2}\pi}{N+2} = \sin \frac{\pi}{2(N+2)}$.
For $N$ even, $\lambda_N = \frac{1}{4\nu^2}$, where $U'_{N+1}(\nu) = 0$. Using \eqref{e6}, we obtain $(N + 3) U_N(\nu) - 
(N + 1) U_{N+2}(\nu) = 0$. Setting $\nu = \cos \vartheta_1$, we can write $(N + 3) \sin\,(N+1)\vartheta_1 - (N + 1)\sin\,(N+3)\vartheta_1 = 0$,
and the substitution $\vartheta = \vartheta_1 - \xfrac{\pi}{2}$ ($\nu = \sin \vartheta$) leads to $(N + 3)\cos(N+1)\vartheta + (N +1)\cos(N+3)\vartheta = 0$, completing the proof
\end{proof}

\section{Finding the extremal polynomials of the auxiliary problem}

We have to find the coefficients of the extremal polynomials of \eqref{e7}.

\begin{theorem}
Let $\mu = \sin \frac{\pi}{2(N + 2)}$ be the least positive root of $U_{N+1}(x)$, and $\nu$ the least positive root of $U'_{N+1}(x)$. Let
\begin{align}
&z_k^{(0)} = (-1)^{k-1}\bigg( U_{k-1}(\mu)U_k(\mu) - U_{N-k}(\mu)U_{N-k+1}(\mu) + \frac{N+1-2k}{N+1}U_N(\mu)U_{N+1}(\mu) \bigg), \label{e10}\\
&z_k^{(1)} = (-1)^{k-1}\bigg( U_{k-1}(\nu)U_k(\nu) - U_{N-k}(\nu)U_{N-k+1}(\nu) + \frac{N+1-2k}{N+1}U_N(\nu)U_{N+1}(\nu) \bigg) \label{e11}                                                                                 
\end{align}
for $k = 1, \ldots, N$,  be the coordinates of the principal vectors of the matrix pencils $I + A - \frac{1}{4\mu^2}(I -B)$, $I + A - \frac{1}{4\nu^2}(I -B)$ respectively.
Then the coefficients of the extremal polynomial of problem \eqref{e7} are given by
\begin{equation}\label{e12}
\begin{aligned}[c]
&\alpha_k = \frac{ \sum_{j=1}^{N-k+1}z_j^{(s)} z_{j+k-1}^{(s)} - \sum_{j=1}^{N-k-1} z_j^{(s)} z_{j+k+1}^{(s)} }
                         { \sum_{j=1}^{N} z_j^{(s)} z_j^{(s)} - \sum_{j=1}^{N-2} z_j^{(s)} z_{j+2}^{(s)} },\quad k = 1, \ldots, N-2, \\ 
&\alpha_{N-1} = \frac {z_1^{(s)} z_{N-1}^{(s)} + z_2^{(s)} z_N^{(s)}} { \sum_{j=1}^N z_j^{(s)} z_j^{(s)} - \sum_{j=1}^{N-2} z_j^{(s)} z_{j+2}^{(s)} }, \\ 
&\alpha_{N} = \frac {z_1^{(s)} z_N^{(s)}} { \sum_{j=1}^N z_j^{(s)} z_j^{(s)} - \sum_{j=1}^{N-2} z_j^{(s)} z_{j+2}^{(s)} }.
\end{aligned}
\end{equation}
where $s=0$ for odd $N$ and $s=1$ if $N$ is even.
\end{theorem}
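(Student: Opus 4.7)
The plan is to combine the generalized-eigenvalue characterization of $\hat J_N$ from the previous section with the Fej\'er--Riesz representation \eqref{e9} and the trigonometric expansion of $\Im F_N^{(0)}(e^{it})$, then read off the coefficients $\alpha_k$ by matching $\sin kt$ terms.

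\textbf{Step 1: Identify the extremal eigenvector.} The preceding theorem shows that $\hat J_N=\lambda_N$, the largest generalized eigenvalue of the positive-definite pencil $I+A-\lambda(I-B)$, and that this maximum is achieved on a principal vector $d$ satisfying $(I+A)d=\lambda_N(I-B)d$. By Lemma~\ref{lem2.2}, for $N$ odd $\lambda_N=\frac{1}{4\mu^2}$ with $\mu=\cos\frac{(N+1)\pi}{2(N+2)}=\sin\frac{\pi}{2(N+2)}$, which by Lemma~\ref{lem2.3} corresponds to the vector $Z^{(0)}(\mu)$; for $N$ even $\lambda_N=\frac{1}{4\nu^2}$ with $\nu$ the least positive root of $U'_{N+1}$, which by Lemma~\ref{lem2.4} corresponds to $Z^{(1)}(\nu)$. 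By Corollary~\ref{cor2.4.1}, both cases can be unified: up to a common scalar, the coordinates of $d$ are exactly the $z_k^{(s)}$ of \eqref{e10}--\eqref{e11}, with $s=0$ for $N$ odd and $s=1$ for $N$ even. Since Lemma~\ref{lem2.2} lists the characteristic numbers as strictly distinct, $\lambda_N$ is simple and $d$ is determined up to scalar.

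\textbf{Step 2: Translate the identity into linear relations for the $\alpha_k$.} Using the factorization $\Im F_N^{(0)}(e^{it})=\beta_0\sin t\cdot(1+2\sum_{j=1}^{N-1}\beta_j\cos jt)$ from \eqref{e8} and the identity $2\sin t\cos jt=\sin(j+1)t-\sin(j-1)t$, I equate coefficients of $\sin kt$ on the two sides of $\sin t+\sum_{j=2}^N\alpha_j\sin jt=\beta_0\sin t+\beta_0\sum_{j=1}^{N-1}\beta_j(\sin(j+1)t-\sin(j-1)t)$. Handling the endpoints $k=1,\,N-1,\,N$ separately, this yields $\alpha_1=\beta_0(1-\beta_2)$, $\alpha_k=\beta_0(\beta_{k-1}-\beta_{k+1})$ for $k=2,\ldots,N-2$, $\alpha_{N-1}=\beta_0\beta_{N-2}$, $\alpha_N=\beta_0\beta_{N-1}$. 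The normalization $\alpha_1=1$ then forces $\beta_0=(1-\beta_2)^{-1}$.

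\textbf{Step 3: Substitute the Fej\'er--Riesz expressions.} By \eqref{e9}, $\beta_j=\sum_{i=1}^{N-j}d_id_{i+j}$ with $d^Td=1$. Writing $d=z^{(s)}/\sqrt{(z^{(s)})^Tz^{(s)}}$ and substituting, the $\alpha_k$ become ratios $(\beta_{k-1}-\beta_{k+1})/(1-\beta_2)$ (and the analogous boundary expressions), which are homogeneous of degree zero in $d$. Consequently the normalization factor $\sum_i(z_i^{(s)})^2$ cancels between numerator and denominator, producing exactly the formulas \eqref{e12}.

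\textbf{Main obstacle.} Step~1 is immediate from Lemmas~\ref{lem2.2}--\ref{lem2.4} and Corollary~\ref{cor2.4.1}, and step~3 is a mechanical cancellation. The real work is the bookkeeping of step~2: keeping track of which telescoping contributions reach each $k$, and correctly isolating the three boundary indices $k=1,N-1,N$. A minor secondary point is to confirm that $\beta_2\neq 1$ so that $\beta_0$ is finite and \eqref{e12} is well defined; this is guaranteed because the denominator $\sum_j(z_j^{(s)})^2-\sum_j z_j^{(s)}z_{j+2}^{(s)}$ is, up to the normalization of $d$, the value of the positive-definite quadratic form $d^T(I-B)d$ at the principal vector, which is strictly positive.
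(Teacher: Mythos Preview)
Your argument is correct and follows essentially the same route as the paper's own proof: identify the principal vector of the pencil at $\lambda_N$, invert the relations \eqref{e8} to express $\alpha_k$ through the $\beta_j$, then substitute the Fej\'er--Riesz expressions $\beta_k=\sum_j d_j d_{j+k}$ and use homogeneity to pass from the normalized $d$ to the unnormalized $z^{(s)}$. Your Step~2 is in fact slightly more careful than the paper's treatment---you retain the factor $\beta_0$ in the inversion $\alpha_k=\beta_0(\beta_{k-1}-\beta_{k+1})$ before cancelling it via $\beta_0=(1-\beta_2)^{-1}$, and you explicitly justify $\beta_2\neq 1$ via positive-definiteness of $I-B$---whereas the paper simply writes \eqref{e13} and moves on.
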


\begin{proof}
Let $F_N^{(0)}(z) = \sum_{j=1}^N \alpha_j z^j$ be the extremal polynomial and  $\Im\left( F_N^{(0)} (e^{it}) \right) = \beta_0 \sin t \,\left(  1 +
2 \sum_{j=1}^{N-1} \beta_j \cos jt  \right)$. 
The vector $\beta = (\beta_0, \ldots, \beta_{N-1})^T$ is related to the coefficients of $F_N^{(0)}(z)$ by formulas \eqref{e8}, which can be reverted:
\begin{equation}\label{e13}
\alpha_j = \beta_{j-1} - \beta_{j+1},\ j = 1, \ldots, N-2,\quad \alpha_{N-1} = \beta_{N-2},\quad \alpha_N = \beta_{N-1}.
\end{equation}
In particular, $\beta_0 = \frac{1}{1-\beta_2}$. 
Consider a principal vector (not necessarily normalized)
$Z = (z_1, \ldots, z_N)^T$, maximizing the Rayleigh quotient. Then $\beta_k = \sum_{j=1}^{N-k} z_j z_{j+k}$, $k = 1, \ldots, N-1$. Now formulas \eqref{e13} imply \eqref{e12}. 
\end{proof}
 
\begin{corollary}\label{cor4.1.1}
For every $N = 1, 2, \ldots$ the extremal polynomial of problem \eqref{e7} is unique.
\end{corollary}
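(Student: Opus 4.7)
The plan is to reduce the corollary at once to the simplicity of the top characteristic number $\lambda_N$ of the pencil $I+A-\lambda(I-B)$, a fact already implicit in the strict ordering of Lemma~\ref{lem2.2}. Once simplicity is in hand, the extremal polynomial's coefficients depend on the principal eigenvector only through the ratios \eqref{e12}, which are homogeneous of degree zero in that eigenvector.

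In more detail, the proof of the preceding theorem shows that every extremal polynomial $F_N^{(0)}$ for \eqref{e7} arises, via Fej\'er--Riesz, from a vector $d\in\mathbb{R}^N$ that maximizes the generalized Rayleigh quotient $d^T(I+A)d/d^T(I-B)d$. Since $I-B$ is positive definite, the standard spectral theorem for the pencil $I+A-\lambda(I-B)$ says that this maximum is attained precisely on the eigenspace associated with $\lambda_N$. Lemma~\ref{lem2.2} lists all $N$ characteristic numbers in strict ascending order; in particular $\lambda_N$ is simple, so the relevant eigenspace is a single line, spanned by the principal vector $Z^{(s)}$ from Lemma~\ref{lem2.3} (when $N$ is odd, $s=0$) or Lemma~\ref{lem2.4} (when $N$ is even, $s=1$).

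Finally, both the numerator and the denominator of every expression in \eqref{e12} are quadratic forms in the coordinates of $Z^{(s)}$, so each $\alpha_k$ is invariant under rescaling of $Z^{(s)}$. Consequently, the coefficients $\alpha_1,\ldots,\alpha_N$ are uniquely determined, and so is $F_N^{(0)}$. The only point requiring care is the claim that no maximizer of the generalized Rayleigh quotient lies outside the top eigenspace; but this is the standard fact for a positive definite pencil, combined with the strict inequality $\lambda_{N-1}<\lambda_N$ already recorded in Lemma~\ref{lem2.2}. No further nontrivial computation is required, which is why I expect the write-up to be essentially a one-line invocation of simplicity together with the homogeneity observation about \eqref{e12}.
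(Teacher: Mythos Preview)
Your proposal is correct and follows essentially the same approach as the paper: the paper's proof is the single sentence ``For a simple generalized eigenvalue, the corresponding subspace of principal vectors is one-dimensional,'' which is precisely the simplicity-of-$\lambda_N$ argument you spell out, together with the (implicit) homogeneity of \eqref{e12}. You have simply made explicit the steps the paper leaves to the reader.
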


\begin{proof} For a simple generalized eigenvalue, the corresponding subspace  of principal vectors is one-dimensional.
\end{proof}

For $N$ odd, formulas \eqref{e10} and \eqref{e12} can be simplified.

\begin{theorem}
Let $N$ be odd. Then the coefficients of the extremal polynomial are given by
\begin{equation}\label{e14}
\alpha_k = \frac {(-1)^{k-1}} { U'_N \left( \sin \frac {\pi} {2(N+2)}  \right) }  U'_{N-k+1} \biggl( \sin \frac {\pi} {2(N+2)} \bigg) U_{k-1} \left( \sin \frac {\pi} {2(N+2)} \right), k = 1, \ldots, N.
\end{equation}
\end{theorem}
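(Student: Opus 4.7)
The plan is to derive \eqref{e14} by specializing \eqref{e12} (with $s=0$) to odd $N$ and simplifying the resulting Chebyshev sums. For odd $N$, the extremal characteristic number of the pencil is $\lambda_N=\frac{1}{4\mu^2}$ with $\mu=\sin\frac{\pi}{2(N+2)}$, a simple root of $U_{N+1}$. Since the eigenspace is one-dimensional and \eqref{e12} is homogeneous of degree zero in the coordinates of the eigenvector, I would work with the simpler representative from Lemma~\ref{lem2.3}, $z_k=(-1)^kU_{k-1}(\mu)U_k(\mu)$.

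Substituting, one is led to evaluate the sums $S_m=\sum_{j=1}^{N-m}U_{j-1}(\mu)U_j(\mu)U_{j+m-1}(\mu)U_{j+m}(\mu)$. Because $\sum_j z_jz_{j+m}=(-1)^m S_m$, formula \eqref{e12} reduces to $\alpha_k=(-1)^{k-1}(S_{k-1}-S_{k+1})/(S_0-S_2)$. Writing $\mu=\cos\psi$ with $\psi=\frac{(N+1)\pi}{2(N+2)}$ and using the identity $U_{j-1}(\cos\psi)U_j(\cos\psi)=\frac{\cos\psi-\cos((2j+1)\psi)}{2\sin^2\psi}$, each summand in $S_m$ becomes the product of two such cosine differences. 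Expanding and applying the standard closed form for sums of cosines in arithmetic progression, the relation $\sin((N+2)\psi)=0$ (i.e.\ $U_{N+1}(\mu)=0$) produces massive cancellation in $S_m$.

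To recognize the result as the right-hand side of \eqref{e14}, I would use the Christoffel--Darboux identity $\sum_{j=0}^{N}U_j(x)U_j(y)=\frac{U_{N+1}(x)U_N(y)-U_N(x)U_{N+1}(y)}{2(x-y)}$ together with its confluent limits at $y=\mu$. These produce $U_N(\mu)U'_{N+1}(\mu)$ and, via derivatives in the appropriate variable, quantities of the form $U_{k-1}(\mu)U'_{N-k+1}(\mu)$ directly; the differentiation formula \eqref{e6} then converts $U'_{N+1}(\mu)$ to $U'_N(\mu)$. A short rearrangement rewrites $(S_{k-1}-S_{k+1})/(S_0-S_2)$ as $U'_{N-k+1}(\mu)U_{k-1}(\mu)/U'_N(\mu)$, establishing \eqref{e14}.

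The main obstacle is the middle step: the quadruple-product sums generate many cosine-in-arithmetic-progression terms, and tracking how they recombine, after invoking $U_{N+1}(\mu)=0$, into the clean product $U_{k-1}(\mu)U'_{N-k+1}(\mu)$ rather than some equivalent but less tidy Chebyshev expression requires careful bookkeeping. An alternative route is to expand the known extremal polynomial \eqref{e3} in powers of $z$ and match the coefficient of $z^k$ to \eqref{e14}, invoking uniqueness (Corollary~\ref{cor4.1.1}); this trades the trigonometric bookkeeping for a comparable partial-fractions computation.
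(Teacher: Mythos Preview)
Your setup coincides with the paper's: use the simpler Lemma~\ref{lem2.3} eigenvector $z_k=(-1)^{k}U_{k-1}(\mu)U_k(\mu)$, so that \eqref{e12} becomes $\alpha_k=(-1)^{k-1}(S_{k-1}-S_{k+1})/(S_0-S_2)$ with $S_m=\sum_j U_{j-1}U_jU_{j+m-1}U_{j+m}$ at $\mu$. The divergence is in how the quadruple-product sums are collapsed. The paper does not pass to cosines; instead it applies the algebraic identity
\[
U_m(x)U_n(x)=U_{m+1}(x)U_{n-1}(x)-U_{n-m-2}(x)
\]
to the factor $U_{j-1}U_{j+k+1}$ inside $S_{k+1}$, which makes almost every term of $S_{k+1}$ cancel against a term of $S_{k-1}$, leaving only boundary terms plus a \emph{double} sum $\sum_j U_jU_{j+k-1}$. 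That double sum is precisely what Corollary~\ref{cor2.5.1} evaluates in closed form, and a final application of \eqref{e6} packages the answer as $U_{k-1}(\mu)U'_{N-k+1}(\mu)/U'_N(\mu)$. The normalization $S_0-S_2$ is handled separately and equals $(N+2)\tan^2\frac{\pi}{2(N+2)}$.

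Your cosine-in-arithmetic-progression expansion can in principle reach the same destination, but the Christoffel--Darboux step is the weak link. The CD kernel and its confluent form control $\sum_{j\le N}U_j(x)U_j(y)$ and $\sum_{j\le N}U_j(\mu)^2$; they do not naturally produce the \emph{shifted} sums $\sum_j U_j(\mu)U_{j+k-1}(\mu)$, nor the $k$-dependent products $U_{k-1}(\mu)U'_{N-k+1}(\mu)$ with a varying inner index. What actually yields those shifted sums is Lemma~\ref{lem2.5}/Corollary~\ref{cor2.5.1}, which is closer to a Dirichlet-kernel computation than to CD. So either replace the CD invocation by Corollary~\ref{cor2.5.1}, or---simpler---adopt the paper's telescoping identity above, which turns the ``careful bookkeeping'' you anticipate into two lines. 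Your alternative of expanding \eqref{e3} and invoking Corollary~\ref{cor4.1.1} is legitimate but about as laborious.
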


\begin{proof}
Consider the principal vector $Z^{(0)} = \gamma \left\{ (-1)^{k-1} U_{k-1}(\mu) U_k(\mu) \right\}_{k=1}^N$, where $\gamma$
is determined from the normalization $\sum_{j=1}^N z_j^{(0)} z_j^{(0)} - \sum_{j=1}^{N-2} z_j^{(0)} z_j^{(0)} = 1$. To find $\gamma$, we write the normalization condition as
\begin{multline*}
\gamma^2\bigl(U_0U_0U_1U_1 + \cdots + U_{N-1} U_{N-1} U_N U_N \\- (U_0 U_1 U_2 U_3 + \cdots
               + U_{N-3} U_{N-2} U_{N-1} U_N)\big) = 1,
\end{multline*}
or
$$
\gamma^{-2} = \sum_{j=1}^N \left( U_{j-1}(\mu) U_j(\mu)  \right)^2 - \sum_{j=1}^{N-2} U_{j-1}(\mu)U_j(\mu)U_{j+1}(\mu)U_{j+2}(\mu).
$$
Using $U_{j-1}(x)U_{j+1}(x) = \left( U_j(x) \right)^2 - 1$ and $U_{N-j}(\mu) = (-1)^{ \bfrac{N-1}{2} }  U_j(\mu)$ we get
\begin{align*}
\gamma^{-2} &= \left( U_0(\mu) U_1(\mu) \right)^2 + \sum_{j=1}^{N-2} \left( U_j(\mu) U_{j+1}(\mu) \right)^2 + \left( U_{N-1}(\mu) U_N(\mu) \right)^2  \\
\
&\quad -
\sum_{j=1}^{N-2}\bigl( ( U_j(\mu) U_{j+1}(\mu) )^2 - ( U_j(\mu) )^2 - ( U_{j+1}(\mu) )^2 +1 \big) \\
&=
2 \sum_{j=1}^{N-1}\left( U_j(\mu) \right)^2 - (N - 2)  \\
&= \frac {1} {\cos^2 \frac {\pi} {2(N+2)} } \biggl( 2 \sum_{j=1}^{N-1} \sin^2 \frac {\pi(N + 1)(j + 1)} {N + 2} - (N - 2)\cos^2 \frac {\pi} {2(N + 2)}  \bigg)  \\
&= \frac {1} {\cos^2 \frac {\pi} {2(N+2)} } \biggl( N - 2 \cos \frac {\pi} {N + 2} - (N - 2) \cos^2 \frac {\pi} {2(N + 2)} \bigg)\\
& = (N + 2) \tan^2 \frac {\pi} {2(N + 2)}. 
\end{align*}
Hence \[
  \gamma = \frac{1}{\sqrt{N + 2}} \cot \frac {\pi} {2(N +2)}\]
and
\begin{align*}
\alpha_k = \frac {(-1)^{k-1}} {N + 2} \cot^2
\frac {\pi} {2(N + 2)}\cdot
\Big( &\sum_{j=1}^{N-k+1} U_{j-1}(\mu) U_j(\mu) U_{j+k-2}(\mu) U_{j+k-1}(\mu)  \\
       -& \sum_{j=1}^{N-k-1} U_{j-1}(\mu) U_j(\mu) U_{j+k}(\mu) U_{j+k+1}(\mu) \Big), \\
        & \quad k = 1, \ldots, N.
\end{align*}
It turns out that the products in the second sum can be transformed in such a way that most of them cancel with the corresponding products in the first sum.
We will use the easily verifiable identity
$$
U_m(x) U_n(x) = U_{m+1}(x) U_{n-1}(x) - U_{n-m-2}(x).
$$

Then
\begin{multline*}
    \sum_{j=1}^{N-k-1} U_{j-1}(\mu) U_{j}(\mu) U_{j+k}(\mu) U_{j+k+1}(\mu)  \\
\begin{aligned}
  &=\sum_{j=1}^{N-k-1} U_j(\mu) U_{j+1}(\mu) U_{j+k-1}(\mu) U_{j+k}(\mu)  \\
  &\quad- \sum_{j=1}^{N-k-1} U_j(\mu) U_{j+k-1}(\mu) U_{k-1}(\mu) - \sum_{j=1}^{N-k-1} U_{j+1}(\mu) U_{j+k}(\mu) U_{k-1}(\mu)  \\ 
 &\quad+ (N - k - 1) \left( U_{k-1}(\mu) \right)^2.
\end{aligned}
\end{multline*}

Consequently,
\begin{align*}
\alpha_k &= (-1)^{k-1} \gamma^2 \Big( U_1(\mu) U_{k-1}(\mu) U_k(\mu) + U_{N-k}(\mu) U_{N-k+1}(\mu) U_{N-1}(\mu) U_N(\mu)  \\
&\quad+ \sum_{j=1}^{N-k-1} U_{j+1}(\mu) U_{j+k}(\mu) U_{k-1}(\mu) \\ 
&\quad+ \sum_{j=1}^{N-k-1} U_j(\mu) U_{j+k-1}(\mu) U_{k-1}(\mu) - (N - k - 1) (U_{k-1}(\mu))^2 \Big),
\end{align*}

and with the use of $U_{N-j}(\mu) = (-1)^{\bfrac{N-1}{2}} U_j(\mu)$,
$$
\alpha_k = (-1)^{k-1} \gamma^2 U_{k-1}(\mu) \left( 2 \sum_{j=1}^{N-k} U_j(\mu) U_{j+k-1}(\mu) - (N - k -1) U_{k-1}(\mu) \right).
$$
Applying Corollary \ref{cor2.5.1}, the formulas $U_{k-3}(\mu) = 2 \mu U_{k-2}(\mu) - U_{k-1}(\mu)$, $ U_k(\mu) = 2 \mu U_{k-1}(\mu) - U_{k-2}(\mu)$, 
and \eqref{e6} we deduce that
\begin{align*}
\alpha_k &= (-1)^{k-1} U_{k-1}(\mu) \frac{\gamma^2 \mu}{2(1 - \mu^2)} \left( (N - k + 3) U_k(\mu) - (N - k +1) U_{k-2}(\mu) \right) \\
&= (-1)^{k-1} U_{k-1}(\mu) \frac{1}{U'_N(\mu)} U'_{N - k +1}(\mu).\qedhere
\end{align*}
\end{proof}

Note that the relation  $U_{k-1}(\mu) U_k(\mu) = (-1)^{k-1}\left| U_{k-1}(\mu) U_k(\mu) \right|$ implies
\begin{align*}
\alpha_k &= \frac{2 \gamma^2 \mu}
{1 - \mu^2} \bigl( (N- k +3) \left| U_{k-1}(\mu) U_k(\mu) \right| + (N -k +1) \left| U_{k-2}(\mu) U_{k-1}(\mu) \right| \big)\\
& > 0, \quad k = 1, \ldots, N.
\end{align*}

We will show that for  $N$ even the coefficients of the extremal polynomial of problem \eqref{e7} are positive.

\begin{theorem}
Let $N$ be even. Then the coefficients of the extremal polynomial of problem \eqref{e7} are positive.
\end{theorem}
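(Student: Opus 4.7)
The strategy is to show positivity of each $\alpha_k$ directly from formula \eqref{e12} with $s=1$, separating the easy cases ($k = 1$, $N-1$, $N$, and the denominator) from the main computational range $2 \leq k \leq N-2$, which we would handle by the same Chebyshev-polynomial toolkit used to derive \eqref{e14} in the $N$ odd case.

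First I would verify that the denominator $D = \sum_{j=1}^N (z_j^{(1)})^2 - \sum_{j=1}^{N-2} z_j^{(1)} z_{j+2}^{(1)}$ is positive. Using the symmetry $z_j^{(1)} = z_{N-j+1}^{(1)}$ from Lemma~\ref{lem2.7}(a), a direct regrouping yields
\[
D = \tfrac12\bigl((z_1^{(1)})^2 + (z_2^{(1)})^2 + (z_{N-1}^{(1)})^2 + (z_N^{(1)})^2\bigr) + \tfrac12 \sum_{j=1}^{N-2} \bigl(z_j^{(1)} - z_{j+2}^{(1)}\bigr)^2 > 0,
\]
positivity following from $z_k^{(1)}>0$ by Lemma~\ref{lem2.7}(b). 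The extremal coefficients are then immediate: formula \eqref{e12} at $k=1$ gives $\alpha_1 = 1$ (the normalization), while $\alpha_{N-1} = (z_1^{(1)} z_{N-1}^{(1)} + z_2^{(1)} z_N^{(1)})/D$ and $\alpha_N = z_1^{(1)} z_N^{(1)}/D$ have strictly positive numerators, again by Lemma~\ref{lem2.7}(b).

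The heart of the proof is, for $2 \leq k \leq N-2$, establishing positivity of the numerator $S_{k-1} - S_{k+1}$ of $\alpha_k$, where $S_j := \sum_{i=1}^{N-j} z_i^{(1)} z_{i+j}^{(1)}$. I would substitute \eqref{e11} into this expression and expand. Because \eqref{e11} is a three-term combination (in contrast to the single term $U_{k-1}(\mu)U_k(\mu)$ in the $N$ odd case), each product $z_i^{(1)} z_{i+j}^{(1)}$ generates nine types of Chebyshev-product sums, which should collapse under the identities that worked in the $N$ odd derivation of \eqref{e14}: the shift identity $U_m(x) U_n(x) = U_{m+1}(x)U_{n-1}(x) - U_{n-m-2}(x)$, the relation $U_{j-1}(x)U_{j+1}(x) = U_j(x)^2 - 1$, the summation in Lemma~\ref{lem2.5} and Corollary~\ref{cor2.5.1}, formula~\eqref{e6}, and crucially the two consequences of $U'_{N+1}(\nu) = 0$, namely $(N+3)U_N(\nu) = (N+1)U_{N+2}(\nu)$ and $(N+2)U_N(\nu) = \nu(N+1)U_{N+1}(\nu)$. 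The target is an explicit closed form analogous to \eqref{e14}, of the shape $\alpha_k = c\cdot(-1)^{k-1}U_{k-1}(\nu) \cdot \Psi_k(\nu)$, in which $(-1)^{k-1}U_{k-1}(\nu) = |U_{k-1}(\nu)| > 0$ (the sign pattern of $U_j(\nu)$ on the interval for $\nu$ given by Lemma~\ref{lem2.6} is exactly the one already exploited in Lemma~\ref{lem2.7}), and $\Psi_k(\nu)$ is a strictly positive sum such as $(N-k+3)|U_{k-1}(\nu)U_k(\nu)| + (N-k+1)|U_{k-2}(\nu)U_{k-1}(\nu)|$ plus an explicit correction involving the product $U_N(\nu)U_{N+1}(\nu)$.

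The main obstacle is the combinatorial bookkeeping of this expansion: the three-term form of $z_k^{(1)}$ produces many more cross-products than in the $N$ odd derivation, and the correction $\tfrac{N+1-2k}{N+1}U_N(\nu)U_{N+1}(\nu)$ simplifies only via the nontrivial relation $(N+2)U_N(\nu) = \nu(N+1)U_{N+1}(\nu)$. That same relation drove the telescoping in the proof of Lemma~\ref{lem2.7}(b), so analogous collapses here are to be expected; nonetheless, executing them cleanly is where the length of the calculation lives.
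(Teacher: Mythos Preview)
Your handling of the denominator and of $\alpha_1,\alpha_{N-1},\alpha_N$ is fine and essentially matches the paper. The gap is in the core range $2\le k\le N-2$: you propose to substitute \eqref{e11} into $z_i^{(1)}z_{i+j}^{(1)}$, expand into nine Chebyshev-product families, and collapse everything to a closed form ``analogous to \eqref{e14}''. But obtaining such a closed form for even $N$ is exactly what the paper flags as unresolved in its concluding section (``A subject for future research is to find a formula for the coefficients of extremal polynomials for $N$ even in the form analogous to \eqref{e14}''). You concede that the bookkeeping is where the work lives and do not carry it out; there is no argument that the nine-term expansion actually telescopes to something manifestly positive, and the correction $\tfrac{N+1-2k}{N+1}U_N(\nu)U_{N+1}(\nu)$ changes sign with $k$, so the shape you conjecture for $\Psi_k(\nu)$ is speculative. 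As it stands, the ``heart of the proof'' is a hope rather than an argument.

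The paper's route is entirely different and avoids all Chebyshev expansion for even $N$. It uses only the two abstract properties of the sequence $(z_k^{(1)})$ supplied by Lemma~\ref{lem2.7}: the palindromic symmetry $z_k^{(1)}=z_{N+1-k}^{(1)}$ and the strict monotonicity $0<z_1^{(1)}<\cdots<z_{N/2}^{(1)}$. With these, the numerator $S_{k-1}-S_{k+1}$ is regrouped, after using the symmetry to reduce every index to at most $N/2$, into a finite sum of terms of the form $z_e^{(1)}\bigl(z_a^{(1)}-z_b^{(1)}\bigr)$ or $\bigl(z_a^{(1)}-z_b^{(1)}\bigr)\bigl(z_c^{(1)}-z_d^{(1)}\bigr)$ with $a>b$ and $c>d$, each of which is positive by the monotonicity. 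The ranges $k\ge N/2$, $k=N/2-1$, and $2\le k\le N/2-2$ need separate (but elementary) regroupings; no Chebyshev identity beyond what already went into Lemma~\ref{lem2.7} is invoked.
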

\begin{proof}
In formulas \eqref{e12}, the denominator is positive; call it $\beta^2$.
 Lemma \ref{lem2.7}(a) yields
  $\alpha_N = \beta^{-2} z_1^{(1)} z_1^{(1)}$, $
\alpha_{N-1} = 2 \beta^{-2} z_1^{(1)} z_2^{(1)}$, and 

\begin{multline*}
  \alpha_k = \beta^{-2} \Bigl(
    \sum_{j=1}^{N-k-1} z_j^{(1)} ( z_{N-k-j+2}^{(1)} - z_{N-k-j}^{(1)} )\\ +
    z_{N-k}^{(1)} z_2^{(1)} + z_{N-k+1}^{(1)} z_1^{(1)}\Bigr) 
    ,\quad k = \xfrac{N}{2}, \ldots, N-2. 
\end{multline*}

Since $z_{N/{2+1}}^{(1)} = z_{N/2}^{(1)}$, we can assume that all indices in the above formula are no greater than $\xfrac{N}{2}$. By Lemma \ref{lem2.7}(b) we have $\alpha_k > 0$, $k = \xfrac{N}{2}, \ldots, N$.

Let now $k = \xfrac{N}{2} -1$. Then 
\[
\beta^2 \alpha_{N/2-1} = \sum_{j=1}^{n/2+2} z_j^{(1)} z_{j+N/2-2}^{(1)} - \sum_{j=1}^{N/2} z_j^{(1)} z_{j+N/2}^{(1)}.
\]
Grouping terms as before will not work, because already  $z_1^{(1)} z_{N/2-1}^{(1)} - z_1^{(1)} z_{N/2+1}^{(1)}\allowbreak = z_1^{(1)} \left( z_{N/2-1}^{(1)} -
z_{N/2}^{(1)} \right) < 0$. 
One can, however, combine the  first two summands in each sum:
\begin{align*}
     &z_1^{(1)} z_{N/2-1}^{(1)} + z_2^{(1)} z_{N/2}^{(1)} - z_1^{(1)} z_{N/2+1}^{(1)} -z_2^{(1)} z_{N/2-1}^{(1)} \\
     & = z_1^{(1)} z_{N/2-1}^{(1)} + z_2^{(1)} z_{N/2}^{(1)} - z_1^{(1)} z_{N/2}^{(1)} -  z_2^{(1)} z_{N/2-1}^{(1)} \\
     & = \left( z_{N/2}^{(1)} - z_{N/2-1}^{(1)} \right) \left( z_2^{(1)} - z_1^{(1)} \right) > 0.
\end{align*}
Then
\begin{align*}
\alpha_{N/2-1} &= \beta^{-2} \Bigl( ( z_{N/2}^{(1)} - z_{N/2-1}^{(1)} ) ( z_2^{(1)} - z_1^{(1)} ) + \sum_{j=3}^{N/2} z_j^{(1)} ( z_{N/2-j+3}^{(1)} - z_{N/2-j+1} ^{(1)})  \\
&\quad + z_{N/2}^{(1)} z_2^{(1)} + z_{N/2-1}^{(1)} z_1^{(1)} \Big).
\end{align*}

Hence $\alpha_{N/2-1} > 0$ by Lemma \ref{lem2.7}(b).

Let $k = 2, \ldots, \xfrac{N}{2} - 2$. Then
\begin{align*}
\alpha_k = \beta^{-2} \Bigl( \sum_{j=1}^{N-k-1} z_j^{(1)} z_{j+k-1}^{(1)} - \sum_{j=1}^{N-k-1} z_j^{(1)} z_{j+k+1}^{(1)}
+z_{k+1}^{(1)} z_2^{(1)} + z_k^{(1)} z_1^{(1)}  \Big).
\end{align*}
Grouping the first two terms in each sum we get
\begin{multline}\label{eq15}
  \alpha_k = \beta^{-2} \Big( \sum_{j=1}^{N/2-k-1} ( z_j^{(1)} z_{j+k-1}^{(1)} -z_j^{(1)} z_{j+k+1}^{(1)} )\\
\begin{aligned}[b]
  &+
                                            \sum_{j=N/2-k+2}^{N-2k} ( z_j^{(1)} z_{j+k-1}^{(1)} - z_j^{(1)} z_{j+k+1}^{(1)} )  \\
                                           &+ z_{N/2-k}^{(1)} z_{N/2-1}^{(1)} + z_{N/2-k+1}^{(1)} z_{N/2}^{(1)} - z_{N/2-k}^{(1)} z_{N/2+1}^{(1)} - z_{N/2-k+1}^{(1)} z_{N/2+2}^{(1)}   \\
                                           &+ \sum_{j=N-2k+1}^{N-k-1} ( z_j^{(1)} z_{j+k-1}^{(1)} - z_j^{(1)} z_{j+k+1}^{(1)} ) + z_{k+1}^{(1)} z_2^{(1)} + z_k^{(1)} z_1^{(1)}              
                                  \Big).
\end{aligned}
\end{multline}

The second sum in \eqref{eq15} can be transformed to
$$
\sum_{j=1}^{N/2-k-1}\left( z_{N-2k+1-j}^{(1)} z_{N-k-j}^{(1)} - z_{N-2k+1-j}^{(1)} z_{N-k+2-j}^{(1)} \right).
$$
Since $N - k - j > \xfrac{N}{2}$, we have $z_{N-k-j}^{(1)} = z_{k+1+j}^{(1)}$, $z_{N-k+2-j}^{(1)} = z_{k-1+j}^{(1)}$. 
The first two sums in \eqref{eq15} can be written as
\begin{align*}
      &\sum_{j=1}^{N/2-k-1} \left( z_j^{(1)} z_{j+k-1}^{(1)} - z_j^{(1)} z_{j+k+1}^{(1)} + z_{N-2k+1-j}^{(1)} z_{j+k+1}^{(1)} - z_{N-k+1-j}^{(1)} z_{j+k-1}^{(1)}\right) \\
   = &\sum_{j=1}^{N/2-k-1} \left( z_{j+k+1}^{(1)} - z_{j+k-1}^{(1)} \right) \left( z_{N-2k+1-j}^{(1)} - z_j^{(1)} \right) \\
   = &\sum_{j=1}^{N/2-k-1} \left( z_{j+k+1}^{(1)} - z_{j+k-1}^{(1)} \right) \left( z_{\varphi_j}^{(1)} - z_j^{(1)}\right),
\end{align*}
where
$$
\varphi_j =
\begin{cases} 
      N - 2k + 1 - j,  & 2k + j > \xfrac{N}{2}, \\
      2k + j,             & 2k + j \leq \xfrac{N}{2}. 
   \end{cases}
$$
Since $j < \varphi_j \leq \xfrac{N}{2}$, we obtain
\begin{align}
\alpha_k = \beta^{-2} \Big( &\sum_{j=1}^{n/2-k-1} ( z_{j+k+1}^{(1)} -z_{j+k-1}^{(1)} ) ( z_{\varphi_j}^{(1)} - z_j^{(1)} ) \notag\\ 
&+( z_{N/2-k+1}^{(1)} - z_{N/2-k}^{(1)} ) ( z_{n/2}^{(1)} - z_{N/2-1}^{(1)} ) \notag \\
                                          &+\sum_{j=1}^{k-1} z_{N-2k+j}^{(1)} ( z_{k+2-j}^{(1)} - z_{k-j}^{(1)} ) + z_{k+1}^{(1)} z_{2}^{(1)} + z_k^{(1)} z_1^{(1)}
                                  \Big) \label{e16}
\end{align}

for $k = 2, \ldots, \xfrac{N}{2} - 2$. 
In the last sum, change $z_{N-2k+j}^{(1)}$ to $z_{\Psi_j}^{(1)}$, where
$$ \Psi_j = \begin{cases}
                    N - 2k + j,  & 2k + 1 - j  > \xfrac{N}{2}, \\
                    2k + 1 - j,   & 2k + 1 - j \leq \xfrac{N}{2}.
\end{cases}
$$
Then all indices in \eqref{e16} are $\le \xfrac{N}{2}$. 
By Lemma \ref{lem2.7}(b), all the differences in brackets are positive, so
 $\alpha_k > 0$, $k = 2, \ldots, \xfrac{N}{2}-2$. 
\end{proof}

\section{Solution of the main problem}

The coefficients of the extremal polynomials of problem \eqref{e7} are positive, so these polynomials are also extremal for~\eqref{e1}.
Moreover, by 
Corollary \ref{cor4.1.1} the extremal polynomial of \eqref{e7} is unique.
Hence \eqref{e1} has exactly two extremal polynomials related by 
 $F_N^{(2)} = -F_N^{(1)}(-z)$, where $F_N^{(1)}(z)$
is the extremal polynomial of \eqref{e7} whose coefficients can be determined from \eqref{e10}--\eqref{e12}.

\section{Examples}
We will construct some extremal polynomials and find the images of the unit circle under them.
Consider $N = 6$ and $N = 7$.
For $N$ even we find the coefficients from \eqref{e11}, \eqref{e12}, while for $N$ odd, from \eqref{e10}, \eqref{e12} or \eqref{e14}:  
$$
F_6(z) = z +
\sum_{j=2}^{6} \alpha_j z^j, 
$$ 
where $\alpha_2 = 1.36252\ldots,$ $\alpha_3 = 1.55595\ldots,$ $\alpha_4 = 1.22943\ldots,$ $\alpha_5 = 0.84332\ldots,$
$\alpha_6 = 0.37361\ldots$; and
$$
F_7(z) = z +\sum_{j=2}^{7} b_j z^j,
$$
where $b_2 = 1.44834\ldots,$ $b_3 = 1.77398\ldots,$ $b_4 = 1.55232\ldots,$ $b_5 = 1.32706\ldots,$
$b_6 = 0.75810\ldots,$ $b_7 = 0.43104\ldots$.
We compute $\nu = 0.19818\ldots,$ $\mu = 0.17364\ldots,$ $J_6 = F_6(1) = \frac{1}{4\nu^2} = 6.36485\ldots,$ $J_7 = F_7(1) = \frac{1}{4\mu^2} =
8.29085\ldots.$ The images of the 
upper unit semicircle under the maps $F_6(z)$ and $F_7(z)$ are shown in Figures \ref{fig1} and \ref{fig2}.

\begin{figure}[H]
\centering
\includegraphics[scale=0.25]{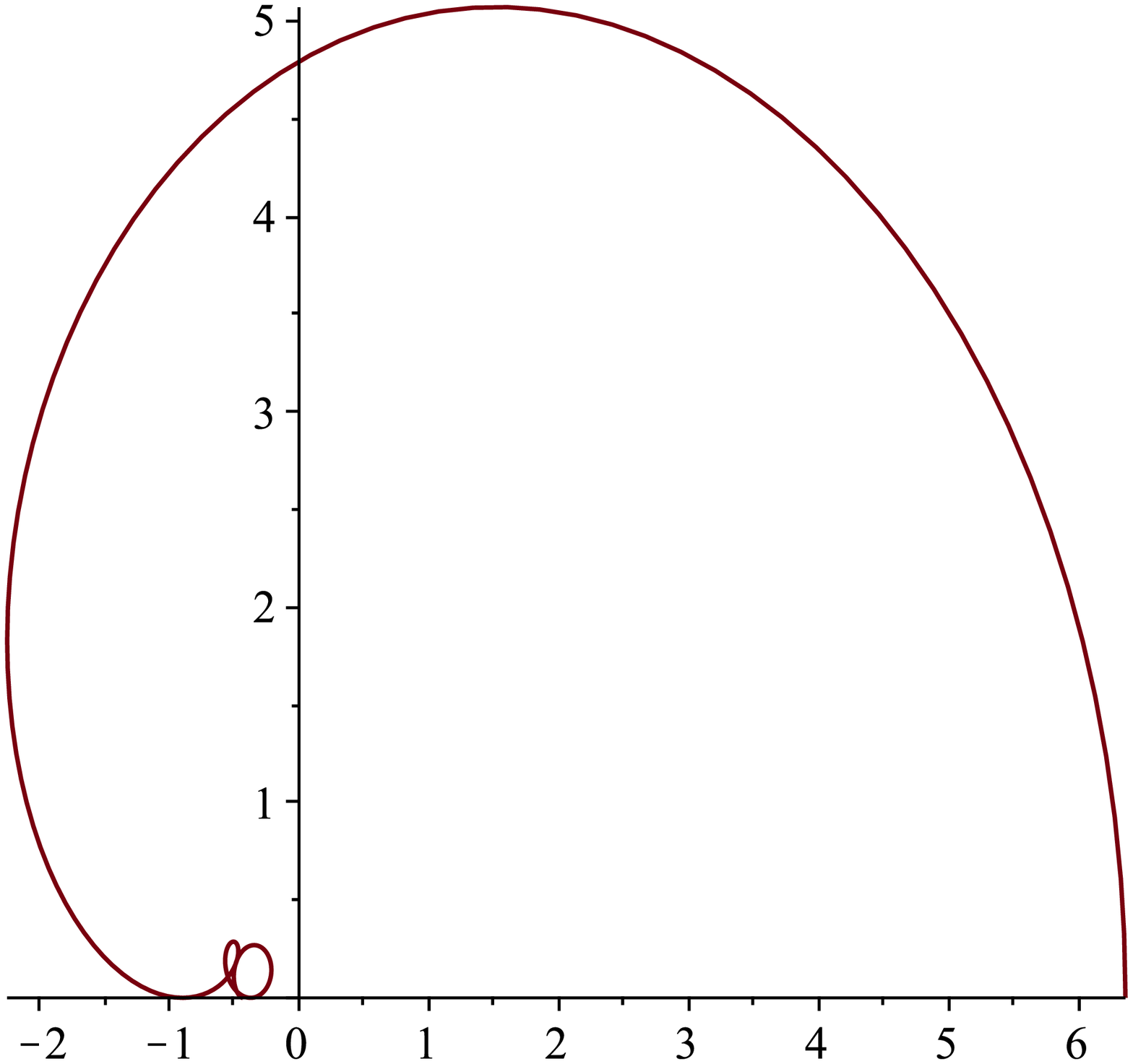}
\caption{The image of the upper unit semicircle under $F_6(z)$ \label{fig1}}
\end{figure}

\begin{figure}[H]
\centering
\includegraphics[scale=0.25]{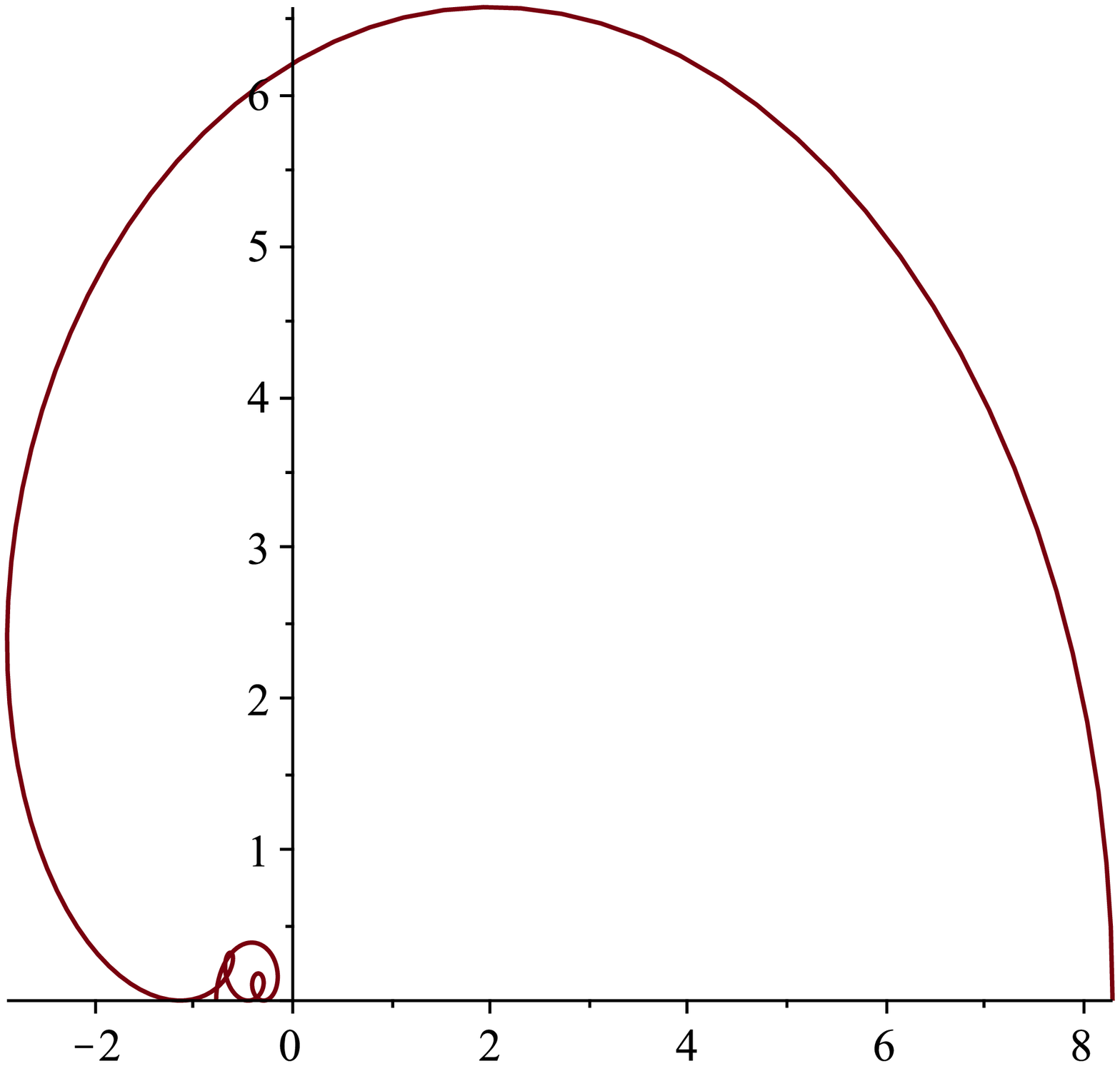}
\caption{The image of the upper unit semicircle under $F_7(z)$ \label{fig2}}
\end{figure}

\section{Conclusion}

Extremal problems for typically real polynomials
 $z + \sum_{j=2}^{N} \alpha_j z^j$  are equivalent to the same problems for sine polynomials  $\sin t + \sum_{j=2}^{N} \alpha_j \sin jt$, nonnegative on~$ [0, \pi]$. In \cite{6} the following extremal problems were considered:
\begin{equation}\label{e17}
\begin{aligned}[c]
& \max \left\{ \alpha_k : \sin t + \sum_{j=2}^N \alpha_j \sin jt \geq 0,\, t \in [0, \pi]  \right\}, \\ 
& \min \left\{ \alpha_k : \sin t + \sum_{j=2}^N \alpha_j \sin jt \geq 0,\, t \in [0, \pi] \right\},\quad k = 2, \ldots, N,
\end{aligned}
\end{equation}
and also
\begin{equation}\label{e18}
\max \left\{\! \sin \vartheta + \sum_{j=2}^N \alpha_j \sin j\vartheta: \sin t + \sum_{j=2}^N \alpha_j \sin jt \geq 0,\, t \in [0, \pi] \right\},\ \vartheta \in [0, \pi].\ 
\end{equation}
Moreover, the problem
\begin{equation}\label{e19}
\max \left\{ {-}1 + \sum_{j=2}^N  (-1)^j \alpha_j: \sin t + \sum_{j=2}^N \alpha_j \sin jt \geq 0,\, t \in [0, \pi] \right\}
\end{equation}
was considered in \cite{7}, and the problem
 \begin{equation}\label{e20}
\max \left\{ 1 + \sum_{j=2}^N |\alpha_j|: \sin t + \sum_{j=2}^N \alpha_j \sin jt \geq 0,\, t \in [0, \pi] \right\}
\end{equation}
in \cite{1}.
Problem \eqref{e20} coincides with problem \eqref{e1}.

In \cite{6}, extremal values were found for problem \eqref{e17} for   $k = 2, 3, N-1, N$ and for problem 
\eqref{e18}.
 In \cite{8}, extremal values for problem \eqref{e17} were found for $k = N - 2$. Finding extremal polynomials turned out to be much harder.
For \eqref{e20} with $k = N-1, N$, extremal polynomials were found in  \cite{8}, as also was the extremal polynomial for $k = N - 2$ with $N$ odd. 
For \eqref{e17} with  $k = 2, 3$ and for \eqref{e18} extremal polynomials are unknown.
 In \cite{7} problem \eqref{e19} was completely solved.  In \cite{3} it was solved in another way, which enabled the proof of uniqueness of the extremal polynomial; the problem was also generalized to arbitrary polynomials, not necessarily typically real.

In \cite{9} a relation was found between \eqref{e19} and the Koebe problem for polynomials.
Note that problem \eqref{e1} in the class of univalent polynomials was solved in
\cite{2}: the extremal value is $\frac14 \left(1 + \frac{1}{N}\right)  \csc^2 \frac {\pi} {2(N + 1)}$, attained at a Suffridge polynomial \cite{10}.

A subject for future research is to find a formula for the coefficients of extremal polynomials for $N$ even in the form analogous to \eqref{e14}.

Finally, let us note that some extremal typically real polynomials can be used in the problems of stability in discrete dynamical systems \cite{12}. 

\section*{Acknowledgements}
The authors are grateful to Elena Berdysheva (Justus Liebig University Giessen) and Paul Hagelstein (Baylor University) for interesting and useful discussions, 
and to Jerzy Trzeciak (IMPAN) for his help in preparing the manuscript.

\end{document}